\pgfplotsset{compat=newest}
\def\vol{{\rm Vol}}
\newcommand{\p} {\textnormal{\textsf{P}}}
\newcommand{\dd}{\mathrm{d}}
\newcommand{\R} {\mathbb{R}}
\newcommand{\Q} {\mathbb{Q}}
\newcommand{\Z} {\mathbb{Z}}
\newcommand{\nn}{\nonumber}
\newcommand{\dist} {\textnormal{dist}}
\newcommand*{\rom}[1]{\text{\expandafter\@slowromancap\romannumeral #1@}}
\newcommand{\dom}{\textnormal{dom}}
\providecommand{\abs}[1]{\lvert#1\rvert}
\providecommand{\Abs}[1]{\Bigr\lvert#1\Bigl\rvert}
\providecommand{\norm}[1]{\lVert#1\rVert}
\providecommand{\Norm}[1]{\Bigr\lVert#1\Bigl\rVert}
\providecommand{\floor}[1]{\lfloor#1\rfloor}
\providecommand{\mb}[1]{\mathbb{#1}}
\providecommand{\mc}[1]{\mathcal{#1}}
\providecommand{\ms}[1]{\mathscr{#1}}
\providecommand{\lbar}[1]{\underline{#1}}
\DeclareMathOperator*{\osc}{osc}
\newtheorem{theorem}{Theorem}
\newtheorem{lemma}[theorem]{Lemma}
\newtheorem{corollary}[theorem]{Corollary}
\newtheorem{remark}[theorem]{Remark}
\begin{document}

\title{Quenched local central limit theorem for random walks in a time-dependent balanced random environment}

\author{Jean-Dominique Deuschel%
  \thanks{Electronic address: \texttt{deuschel@math.tu-berlin.de}}}
\affil{Instit\"ut f\"ur Mathematik\\ Technische Universit\"at Berlin}

\author{Xiaoqin Guo%
  \thanks{Electronic address: \texttt{xguo@math.wisc.edu}}}
\affil{Department of Mathematics\\University of Wisconsin-Madison}
\maketitle

\begin{abstract}
We prove a quenched local central limit theorem for continuous-time random walks in $\mathbb Z^d, d\ge 2$, in a uniformly-elliptic time-dependent balanced random environment which is ergodic under space-time shifts. We also obtain Gaussian upper and lower bounds for quenched and (positive and negative) moment estimates of the transition probabilities and asymptotics of the discrete Green function.
\end{abstract}

\section{Introduction}~\label{sec:intro}
In this article we consider a random walk in a balanced uniformly-elliptic time-dependent random environment on $\Z^d, d\ge 2$. 

For $x,y\in\Z^d$, we write $x\sim y$ if $|x-y|_2=1$. 
Denote by $\mc P$ the set ({\it of  nearest-neighbor transition rates on $\Z^d$}) 
\[
\mc P:=\left\{v: \Z^d\times\Z^d\to[0,\infty)\bigg|v(x,y)=0 \text{ if }x\nsim y\right\}.
\]
Equip $\mc P$ with the the product topology and the corresponding Borel $\sigma$-field.
We denote by $\Omega\subset \mc P^{\R}$ the set of all measurable  functions
$\omega: t\mapsto \omega_t$ from $\R$ to $\mc P$
 and call every $\omega\in\Omega$ a time-dependent {\it environment}. 
For $\omega\in\Omega$, we define the parabolic difference operator 
\begin{align*}
\mc L_\omega u(x,t)
&=\sum_{y:y\sim x}\omega_t(x,y)(u(y,t)-u(x,t))+\partial_t u(x,t)
\end{align*}
for every bounded function $u:\Z^d\times\R\to\R$ which is differentiable in $t$. 
Let $(\hat X_t)_{t\ge 0}=(X_t,T_t)_{t\ge 0}$ denote the continuous-time Markov chain on $\Z^d\times\R$ with generator $\mc L_\omega$. Note that almost surely, $T_t=T_0+t$. We say that $(X_t)_{t\ge 0}$ is a {\it continuous-time random walk in the environment }$\omega$ and denote by $P_\omega^{x,t}$ its law (called the {\it quenched law})
with initial state $(x,t)\in\Z^d\times\R$. 

 We equip $\Omega\subset\mc P^\R$ with the induced product topology and let $\mb P$ be a probability measure on the Borel $\sigma$-field $\mc B(\Omega)$ of $\Omega$. 
An environment $\omega\in\Omega$ is said to be  {\it balanced} if  
\[
\sum_{y}\omega_t(x,y)(y-x)=0, \quad
\text{ for all } t\in\R, x\in\Z^d
\]
and {\it uniformly elliptic} if  there is a constant $\kappa\in(0,1)$ such that
\[
\kappa< \omega_t(x,y)<\tfrac{1}{\kappa} \quad \text{ for all } t\in\R, x,y\in\Z^d \text{ with } x\sim y.
\]
Let $\Omega_\kappa\subset\Omega$ \label{page:defomg} denote the set of  balanced and uniformly elliptic environments with ellipticity constant $\kappa\in(0,1)$. The measure $\mb P$ is said to be balanced and uniformly elliptic if
$\mb P\left(
\omega\in\Omega_\kappa
\right)=1$ for some $\kappa\in(0,1)$.


For each $(x,t)\in\Z^d\times\R$ we define the space-time shift $\theta_{x,t}\omega:\Omega\to\Omega$ by 
\[
(\theta_{x,t}\omega)_s(y,z):=\omega_{s+t}(y+x,z+x).
\]
We assume that the law $\mb P$ of the environment is translation-invariant and {\it ergodic} under the space-time shifts $\{\theta_{x,t}:x\in\Z^d, t\ge 0\}$. I.e, $P(A)\in\{0,1\}$ for any $A\in\mc B(\Omega)$ such that $\mb P(A\Delta \theta_{\hat x}^{-1}A)=0$ for all $\hat x\in\Z^d\times[0,\infty)$.

Given $\omega$, the environmental process
\begin{equation}\label{eq:def-omegabar}
\bar\omega_t:=\theta_{\hat X_t}\omega, \qquad t\ge 0,
\end{equation}
with initial state $\bar\omega_0=\omega$ is a Markov process on $\Omega$. With abuse of notation, we use $P_\omega^{0,0}$ to denote the quenched law of $(\bar\omega_t)_{t\ge 0}$.

\noindent{\bf Assumptions:} {\it throughout this paper, we assume that $\mb P$ is balanced, ergodic, and uniformly elliptic with ellipticity constant $\kappa>0$.}

We recall the quenched central limit theorem (QCLT) in \cite{DGR15}.

\begin{theorem}\cite[Theorem 1.2]{DGR15}\label{thm:recall} 
Under the above assumptions of $\mb P$, 
\begin{enumerate}[(a)\,]
\item 
there exists a unique invariant measure $\mb Q$ for the process $(\bar\omega_t)_{t\ge 0}$ such that $\mb Q\ll\mb P$ and $(\bar\omega_t)_{t\ge 0}$ is an ergodic flow under $\mb Q\times P_\omega^{0,0}$. 
\item\label{item:qclt}(QCLT)
 For $\mb P$-almost all $\omega$, $P_\omega^{0,0}$-almost surely, $(X_{n^2t}/n)_{t\ge 0}$ converges weakly, as $n\to\infty$, to a Brownian motion with deterministic non-degenerate covariance matrix $\Sigma={\rm diag}\{2E_{\mb Q}[\omega_0(0,e_i)], i=1,\ldots,d\}$.
\end{enumerate}

\end{theorem}
In the special case where the environment is time-independent, i.e, $\mb P(\omega_t=\omega_s \text{ for all } t,s\in\R)=1$, we say that the environment is {\it static}.
\begin{remark} 
For balanced random walks in a static, uniformly-elliptic, ergodic random environment on $\Z^d$, the QCLT has been first shown  by Lawler \cite{Lawl82}, which is a discrete version of the result of Papanicolaou and Varadhan \cite{PV82}. It is then generalized to static random environments with weaker ellipticity assumptions in \cite{GZ, BD14}. 
\end{remark}

\begin{remark}\label{rm3}
Write $\norm{f}_{L^p(\mb P)}:=(E_{\mb P}[|f|^p])^{1/p}$ for $p\in\R$.
Let 
\[
\rho(\omega):=\dd\mb Q/\dd\mb P.
\]
It is obtained in \cite{DGR15} that $\rho>0$,  $\mb P$-almost surely.
At the end of the proof of \cite[Theorem 1.2]{DGR15}, it is shown that $E_\mb Q[g]\le C\norm{g}_{L^{d+1}(\mb P)}$ for any bounded continuous function $g$, which  implies 
\begin{equation}\label{rho-moment}
E_{\mb P}[\rho^{(d+1)/d}]<\infty.
\end{equation}
Moreover, one of our main results (see	Theorem~\ref{thm:ap-property}\eqref{item:neg-moment} below) shows that there exists $q=q(\kappa,d)$ such that the $L^{-q}(\mathbb P)$ moment of $\rho$ is also bounded. 

For $(x,t)\in\Z^d\times\R$, set
\[
\rho_\omega(x,t):=\rho(\theta_{x,t}\omega).
\]
Since $\Omega$  is equipped with a product $\sigma$-field, for any fixed $\omega\in\Omega$, the map $\R\to\Omega$ defined by $t\mapsto \theta_{0,t}\omega$ is measurable. Hence for almost-all $\omega$, the function $\rho_\omega(x,t)$ is measurable in $t$.
Moreover, $\rho_\omega$ possesses the following properties.
For $\mb P$-almost all $\omega$,
\begin{enumerate}[(i)]
\item $\rho_\omega(x,t)\delta_x\dd t$ is an invariant measure for the process $\hat X_t$ under $P_\omega$;
\item $\rho_\omega(x,t)>0$ is the unique density (with respect to $\delta_x\dd t$) for an invariant measure of $\hat X$ that satisfies $E_\mb P[\rho_\omega(0,0)]=1$;
\item $\rho_\omega$ has a version which is absolutely continuous with respect to $t$ with 
\begin{equation}\label{rho-invariance}
\partial_t\rho_\omega(x,t)=\sum_{y} \rho_\omega(y,t)\omega_t(y,x)
\end{equation}
for almost every $t$, where  $\omega_t(x,x):=-\sum_{y:y\sim x}\omega_t(x,y)$.
\end{enumerate}
 The proof of these properties can be found in  \cite[Appendix]{arXiv}.
 
\end{remark}

As a main result of our paper, we will present the following {\it local limit theorem} (LLT), which is a finer characterization of the local behavior of the random walk than the QCLT. 
Let 
\[
\hat 0:=(0,0)\in\Z^d\times\R.
\] 
For $\hat x=(x,t),\hat y=(y,s)\in \Z^d\times\R$, $t\le s$, define 
\begin{equation}\label{eq:def-hk}
p^\omega(\hat x, \hat y):=P_\omega^{x,t}(X_{s-t}=y),
\quad
q^{\omega}(\hat x,\hat y)=\dfrac{p^\omega(\hat x,\hat y)}{\rho_\omega(\hat y)}.
\end{equation}

\begin{theorem}
[LLT]\label{thm:llt}
For $\mb P$-almost all $\omega$ and any $T>0$,
\[
\lim_{n\to\infty}\sup_{x\in\R^d,t>T}
\Abs{
n^dq^\omega(\hat 0;\floor{nx},n^2t)
-p_t^\Sigma(0,x)
}=0.
\]
Here $p_t^\Sigma(0,x)=[(2\pi t)^{d}\det\Sigma]^{-1/2}\exp(-x^T\Sigma^{-1} x/2t)$ is the transition kernel of the Brownian motion with covariance matrix $\Sigma$ and starting point $0$, and $\floor{x}:=(\floor{x_1},\ldots, \floor{x_d})\in\Z^d$ for $x\in\R^d$.
\end{theorem}

The proof of the LLT follows from Theorem~\ref{thm:recall} and a localization of the {\it heat kernel} $q^\omega(\hat 0,\cdot)$, an argument already implemented in \cite{Barlow-Hambly09} and \cite{ADS18} in the context of random conductance models.
For this purpose, the regularity of $\hat x\mapsto q^\omega(\hat 0,\hat x)$ is  essential. We use an analytical tool from classical PDE theory: the parabolic Harnack inequality (PHI) which yields not only H\"older continuity (cf. Corollary~\ref{cor:hoelder} below) of $q^\omega(\hat 0,\cdot)$ but also very sharp heat kernel estimates.
Note that for fixed $\hat x=(x,t)$, the function 
$u(\hat y)=q^\omega(\hat y,\hat x)$
satisfies
$\mc L_\omega u=0$ in $\Z^d\times(-\infty,t)$. 
However in our non-reversible model, we need to prove, instead of PHI for $\mc L_\omega$, the PHI (Theorem~\ref{thm-ah}) for the {\it adjoint operator} $\mc L^*_\omega$ (defined below), since the heat kernel
 $v_\omega(\hat x):=q^\omega(\hat 0,\hat x)$ 
 solves 
\begin{equation}\label{e27}
\mc L_\omega^*v(\hat x):=\sum_{y:y\sim x}\omega_t^*(x,y)(v(y,t)-v(\hat x))-\partial_t v(\hat x)=0
\end{equation}
for  $\hat x=(x,t)\in\Z^d\times(0,\infty)$, where 
\[
\omega_t^*(x,y):=\frac{\rho_\omega(y,t)\omega_t(y,x)}{\rho_\omega(x,t)}
\quad\mbox{ for }x\sim y\in\Z^d.
\]
Note that $\omega^*$ is not necessarily a balanced environment anymore.

For $r>0$, $x\in\R^d$, we let 
\[
B_r(x)=\{y\in\Z^d: |x-y|_2<r\}, \quad B_r=B_r(0)
\]
and define for $\hat x=(x,t)\in\R^d\times\R$ the {\it parabolic balls}
\begin{equation}\label{def:parab-ball}
Q_r(\hat x)=B_r(x)\times[t,t+r^2), \qquad Q_r=Q_r(\hat 0).
\end{equation}
Throughout this paper, unless otherwise specified, 
$C, c$ denote generic positive constants that depend only on $(d,\kappa)$, and which may differ from line to line. 
If $cB\le A\le CB$, we write 
\[
A\asymp B.
\]
 Our second main result is
\begin{theorem}[PHI for $\mc L^*_\omega$]\label{thm-ah}
For
$\mb P$-almost all $\omega$, 
any  non-negative solution $v$ of the adjoint equation
$\mc L_\omega^* v=0$ in $B_{2R}\times(0, 4R^2]
$ satisfies
\[
\sup_{B_R\times(R^2,2R^2)}v\le C\inf_{B_R\times (3R^2,4R^2]}v.
\]
\end{theorem}

In PDE, the Harnack inequality for the adjoint of non-divergence form elliptic differential operators was first 
proved by Bauman \cite{Baum84}, and was generalized to the parabolic setting by Escauriaza \cite{Esc00}. Our proof of Theorem~\ref{thm-ah} follows the main idea of \cite{Esc00}.

\begin{remark}
For time discrete  random walks in a static environment, Theorem~\ref{thm-ah} was obtained by Mustapha \cite{Mustapha06}. His argument follows basically \cite{Esc00}, and uses the PHI \cite[Theorem~4.4]{KT98} of Kuo and Trudinger in the time discrete situation.
 Moreover, in the static case, the volume-doubling property of the invariant distribution, which is the essential part of the proof of Theorem~\ref{thm-ah}, is much simpler, see \cite{FS84}. In our dynamical setting, a {\it parabolic} volume-doubling property (Theorem~\ref{thm:vd}) is required. To this end, we adapt ideas of  Safonov-Yuan \cite{SY} and results in the references therein \cite{FSY, Baum84,Garo} into our discrete space setting.
 \end{remark}

The main challenge in proving Theorem~\ref{thm-ah} is that $\mc L^*_\omega$ is neither balanced nor uniformly elliptic, and so the PHI for $\mc L_\omega$ (Theorem~\ref{Harnack}) is not immediately applicable. This is the main difference with  the random conductance model with symmetric jump rates
where 
\[
\omega_t(x,y)=\omega_t(y,x)=\omega^*_t(x,y),
\]
and thus which PHI for $\mc L_\omega$ is the same as PHI for $\mc L_\omega^*$. See  \cite{Andres14,Delm99,DD05,ACDS,HK16}.
%

%

Let us explain the main idea for the proof of Theorem~\ref{thm-ah}.
An important observation is that solutions $v$ of $\mc L_\omega^*$ can be expressed in terms of hitting probabilities of the {\it time-reversed} process, cf. Lemma~\ref{lem8} below. Thus to compare values of the adjoint solution, one only needs to estimate hitting probabilities of the {\it original} process that {\it starts from} the boundary. To this end, we will use a  ``boundary Harnack inequality" (Theorem~\ref{thm-bh}) which compares $\mc L_\omega$-harmonic functions near the boundary. We will also need a volume-doubling inequality for the invariant measure (Theorem~\ref{thm:vd}) to control the change of probabilities due to time-reversal.

Recall the heat kernel $q^\omega$ in \eqref{eq:def-hk}. For any $A\subset\R^d$ and $s\in\R$, let
\[\rho_\omega(A,s)=\sum_{x\in A\cap\Z^d}\rho_\omega(x,s).\]
  We write the  $\ell^2$-norm of $x\in\R^d$ as $|x|=|x|_2$. For $r\ge 0,t>0$, define
\begin{equation}\label{eq:def-function-h}
\mathfrak{h}(r,t)=\tfrac{r^2}{t\vee r}+r\log(\tfrac{r}{t}\vee 1).
\end{equation}
Note that $\mathfrak{h}(c_1r,c_2 t)\asymp \mathfrak{h}(r,t)$ for  constants $c_1,c_2>0$.

Our third main results are the following heat kernel estimates (HKE). \begin{theorem}[HKE]\label{thm:hke}
For $\mb P$-almost every $\omega$ and all $\hat x=(x,t)\in\Z^d\times(0,\infty)$,
\begin{equation}\label{eq:quenched-hke}
\frac{c}{\rho_\omega(B_{\sqrt t}(y),s)}
e^{-C\frac{|x|^2}{t}}
\le 
q^\omega(\hat 0,\hat x)
\le 
\frac{C}{\rho_\omega(B_{\sqrt t}(y),s)}
e^{-c\mathfrak{h}(|x|,t)}
\end{equation}
for all $s\in[0,t]$ and $y$ with $|y|\le |x|+c\sqrt t$. 
 Moreover, recalling the definition of $L^p(\mb P)$ in Remark~\ref{rm3}, there exists $p=p(d,\kappa)>0$  such that
\begin{align}
&\norm{P_\omega^{0,0}(X_t=x)}_{L^{(d+1)/d}(\mb P)}
\le \frac{C}{(t+1)^{d/2}}e^{-c\mathfrak{h}(|x|,t)}\label{eq:mm-hke1}\\
\text{and }\quad
&\norm{P_\omega^{0,0}(X_t=x)}_{L^{-p}(\mb P)}
\ge
\frac{c}{(t+1)^{d/2}}e^{-C\frac{|x|^2}{t}}\label{eq:mm-hke2}
\end{align}	
for all $(x,t)\in\Z^d\times(0,\infty)$. As a consequence,  setting 
$G^\omega(0,x)=\int_0^\infty P_\omega^{0,0}(X_t=x)\dd t$, we have for  $d\ge 3$ and $x\in\Z^d$, 
\begin{equation}\label{eq:mm-green}
\norm{G^\omega(0,x)}_{L^{(d+1)/d}(\mb P)}\asymp
\norm{G^\omega(0,x)}_{L^{-p}(\mb P)}\asymp
(|x|+1)^{2-d}.
\end{equation}
\end{theorem}

Note that for a general ergodic environment, the density $\rho_\omega$ does not have deterministic (positive) upper and lower bounds, thus one cannot expect deterministic quenched Gaussian bounds for $p^\omega(\hat 0,\hat x)$. However, our Theorem~\ref{thm:hke} shows that it has $L^{(d+1)/d}(\mb P)$ and $L^{-q}(\mb P)$ moment bounds. 
Furthermore, we can characterize asymptotics of the  Green function of the RWRE.
 Recall the notations $\Sigma$ (in Theorem~\ref{thm:recall} \eqref{item:qclt}), $p^\Sigma_t$, $\floor{x}$ (in Theorem~\ref{thm:llt}), and $\mathfrak{h}$ in \eqref{eq:def-function-h}.
\begin{corollary}\label{cor:q-estimates}
 The following statements are true for $\mb P$-almost every $\omega$.
\begin{enumerate}[(i)]
\item\label{cor:q-hke}  
There exists $t_0(\omega)>0$ such that 
for any $\hat x=(x,t)\in\Z^d\times(t_0,\infty)$,
\[
\frac{c}{t^{d/2}}e^{-\frac{C|x|^2}{t}}
\le q^\omega(\hat 0,\hat x)
\le 
\frac{C}{t^{d/2}}e^{-c\mathfrak{h}(|x|,t)}.
\]
As a consequence, the RWRE is recurrent when $d=2$ and transient when $d\ge 3$.
\item\label{cor:green1} 
When $d=2$,  for all $x\in \R^d\setminus\left\{0\right\}$,
\[\lim_{n\to\infty}\frac{1}{\log n}\int_0^\infty \left[q^\omega(\hat 0;0,t)-q^\omega(\hat 0;\floor{nx},t)\right]\dd t
=\frac{1}{\pi\sqrt{\det\Sigma}}.
\]
\item\label{cor:green2}
When $d\ge 3$, for all $x\in\R^d\setminus\left\{0\right\}$,
\[
\lim_{n\to\infty}n^{d-2}\int_0^\infty q^\omega(\hat 0;\floor{nx},t)\dd t
=\int_0^\infty p^\Sigma_t(0,x)\dd t.
\]
\end{enumerate}
\end{corollary}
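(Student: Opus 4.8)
The plan is to deduce all three parts from the heat kernel estimate (Theorem~\ref{thm:hke}), the local limit theorem (Theorem~\ref{thm:llt}), the H\"older estimate (Corollary~\ref{cor:hoelder}) and the ergodic theorem applied to $\rho_\omega$, after noting that $q^\omega(\hat 0;x,t)=P_\omega^{0,0}(X_t=x)/\rho_\omega(x,t)$. \textbf{Part (i).} Dividing the two inequalities of Theorem~\ref{thm:hke} by $\rho_\omega(x,t)$ cancels that factor, leaving, for every $s\in[0,t]$ and every $y$ with $|y|\le|x|$,
\[
\frac{c\,e^{-C|x|^2/t}}{\rho_\omega(B_{\sqrt t}(y),s)}\le q^\omega(\hat 0;x,t)\le\frac{C\,e^{-c(1\wedge\frac{|x|}{t})|x|}}{\rho_\omega(B_{\sqrt t}(y),s)}.
\]
Hence it suffices to find, for each large $t$, one admissible pair --- I take $y=0$ --- with $\rho_\omega(B_{\sqrt t}(0),s)\asymp t^{d/2}$. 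The multiparameter ergodic theorem applied to the average of $\rho_\omega$ over $B_R\times[0,R^2]$ (with $E_{\mb P}[\rho_\omega(0,0)]=1$ from Remark~\ref{rm3}) gives $R^{-(d+2)}\int_0^{R^2}\rho_\omega(B_R(0),s)\,ds\to c_d>0$; so for $\mb P$-a.e.\ $\omega$ there is $t_0(\omega)$ with: for $t\ge t_0(\omega)$, $R=\sqrt t$, the $[0,R^2]$-average of $R^{-d}\rho_\omega(B_R(0),\cdot)$ lies in $[c_d/2,2c_d]$, whence there exist $s_1,s_2\in[0,t]$ with $R^{-d}\rho_\omega(B_R(0),s_1)\ge c_d/2$ and $R^{-d}\rho_\omega(B_R(0),s_2)\le2c_d$. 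Inserting $s_1$ in the upper and $s_2$ in the lower estimate gives the two-sided bound with deterministic constants (and, along the way, $\sup_z q^\omega(\hat 0;z,u)\le Cu^{-d/2}$ for $u\ge t_0(\omega)$). For recurrence/transience: $P_\omega^{0,0}(X_t=0)=q^\omega(\hat 0;0,t)\,\rho_\omega(0,t)\asymp t^{-d/2}\rho_\omega(0,t)$ for $t\ge t_0(\omega)$, and Birkhoff's theorem for the $\mb P$-preserving time flow $(\theta_{0,s})_{s\ge0}$ gives $\int_0^T\rho_\omega(0,s)\,ds\asymp_\omega T$ for large $T$ (the a.s.\ limit is finite, and positive since $\rho>0$ $\mb P$-a.s.), so a dyadic decomposition shows $\int_0^\infty P_\omega^{0,0}(X_t=0)\,dt<\infty$ when $d\ge3$ and $=\infty$ when $d=2$. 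A chaining bound $P_\omega^{0,0}(X_t=0\text{ for some }t\in[k,k+1])\le CP_\omega^{0,0}(X_{k+2}=0)$ from Theorem~\ref{Harnack}, plus a conditional Borel--Cantelli argument using the Markov property of $(\bar\omega_t)$, upgrades this to transience for $d\ge3$ and recurrence for $d=2$.

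\textbf{Reduction and part (iii).} Substituting $t=n^2s$ and using Brownian scaling $n^dp^\Sigma_{n^2s}(0,nx)=p^\Sigma_s(0,x)$, the quantities to be computed become $\int_0^\infty n^dq^\omega(\hat 0;\floor{nx},n^2s)\,ds$ and, for $d=2$, $\frac1{\log n}\int_0^\infty[n^2q^\omega(\hat 0;0,n^2s)-n^2q^\omega(\hat 0;\floor{nx},n^2s)]\,ds$. By Theorem~\ref{thm:llt}, for each fixed $s>0$, $n^dq^\omega(\hat 0;\floor{nx},n^2s)\to p^\Sigma_s(0,x)$ uniformly on compact $s$-subsets of $(0,\infty)$ (likewise $n^dq^\omega(\hat 0;0,n^2s)\to p^\Sigma_s(0,0)$; taking $x=0$, also $t\,q^\omega(\hat 0;0,t)\to p^\Sigma_1(0,0)=(2\pi\sqrt{\det\Sigma})^{-1}$ as $t\to\infty$ when $d=2$). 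For $d\ge3$, $g^\Sigma(0,x)=\int_0^\infty p^\Sigma_s(0,x)\,ds<\infty$, and dominated convergence applies: for $s\ge s_0>0$ and $n$ large, part (i) with $R=n\sqrt s$ yields the $n$-free integrable majorant $n^dq^\omega(\hat 0;\floor{nx},n^2s)\le Cs^{-d/2}e^{-c|x|^2/s}$; on $(0,s_0]$ the contribution tends to $0$ as $s_0\downarrow0$ uniformly in $n$, since reaching $\floor{nx}$ (at distance $\asymp n|x|$) in time $n^2s\le n^2s_0$ requires $\gtrsim n|x|$ jumps --- giving $q^\omega(\hat 0;\floor{nx},n^2s)\le C_\omega e^{-cn|x|}$ when $n^2s=O_\omega(1)$, using $\rho_\omega(B_{\sqrt u}(0),0)\ge\rho_\omega(0,0)>0$, and otherwise the Gaussian tail $\le e^{-c|x|^2/s_0}$. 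Hence the integral converges to $g^\Sigma(0,x)$.

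\textbf{Part (ii), $d=2$.} Now $\int_0^\infty q^\omega(\hat 0;0,t)\,dt=\infty=\int_0^\infty p^\Sigma_s(0,0)\,ds$, and the point is to isolate the $\log n$ coefficient. Fix large $A=A(\omega)$ and split $\int_0^\infty[q^\omega(\hat 0;0,t)-q^\omega(\hat 0;\floor{nx},t)]\,dt=\int_0^A+\int_A^{n^2}+\int_{n^2}^\infty$. On $[0,A]$ the integral is a finite $\omega$-dependent constant (the $\floor{nx}$-term $\to0$), hence $o(\log n)$. On $[n^2,\infty)$, writing $t=n^2s$, Theorem~\ref{thm:llt} gives uniform convergence on $[1,\infty)$ of the integrand to $p^\Sigma_s(0,0)-p^\Sigma_s(0,x)$, with an integrable majorant: bounded on the fixed interval $[1,c_0|x|^2]$, and on $[c_0|x|^2,\infty)$ equal to $n^2|q^\omega(\hat 0;0,n^2s)-q^\omega(\hat 0;\floor{nx},n^2s)|\le C|x|^\gamma s^{-1-\gamma/2}$, obtained by applying Corollary~\ref{cor:hoelder} to $v_\omega=q^\omega(\hat 0,\cdot)$ at the equal-time points $(0,n^2s)$ and $(\floor{nx},n^2s)$ on the \emph{time-shifted} cylinder $B_{\sqrt{n^2s/2}}(0)\times(n^2s/2,n^2s]$, whose base lies at times $\ge n^2s/2$ where part (i) bounds $v_\omega$ by $C(n^2s)^{-d/2}$; hence this block $\to\int_1^\infty[p^\Sigma_s(0,0)-p^\Sigma_s(0,x)]\,ds<\infty$, again $o(\log n)$. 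On $[A,n^2]$: $\int_A^{n^2}q^\omega(\hat 0;\floor{nx},t)\,dt\le\int_0^1u^{-1}e^{-c|x|^2/u}\,du=O(1)$ (substitute $u=t/n^2$ in Theorem~\ref{thm:hke}), whereas $\int_A^{n^2}q^\omega(\hat 0;0,t)\,dt=\int_A^{n^2}\frac{(2\pi\sqrt{\det\Sigma})^{-1}+o(1)}{t}\,dt=\frac{1}{2\pi\sqrt{\det\Sigma}}\log\frac{n^2}{A}+o(\log n)=\frac1{\pi\sqrt{\det\Sigma}}\log n+o(\log n)$. Dividing by $\log n$ yields $\frac1{\pi\sqrt{\det\Sigma}}$.

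\textbf{Main obstacle.} The delicate point is the uniform-in-$n$ integrable domination needed to exchange limit and integral: in (ii) the decay $s^{-1-\gamma/2}$ must be proved for the \emph{difference} $q^\omega(\hat 0;0,\cdot)-q^\omega(\hat 0;\floor{nx},\cdot)$, which forces one to run the H\"older estimate on a parabolic cylinder whose time-base is kept away from $0$ so that the supremum appearing there is controlled by part (i); and, for the last assertion of (i), converting the dichotomy for $\int_0^\infty P_\omega^{0,0}(X_t=0)\,dt$ into an almost-sure statement about the trajectories requires the conditional Borel--Cantelli / Markov argument.
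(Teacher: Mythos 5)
Your proposal is correct and follows essentially the same route as the paper: part (i) from Theorem~\ref{thm:hke} combined with the ergodic-theorem bound $\rho_\omega(B_{\sqrt t},\cdot)\asymp t^{d/2}$ (the paper's \eqref{eq:rho-ergodic}), and parts (ii)--(iii) by splitting the time integral into a small-time tail, a bulk region handled by Theorem~\ref{thm:llt}, and a large-time tail controlled by the heat-kernel bounds together with Corollary~\ref{cor:hoelder} applied on time-shifted cylinders of the form $B_{\sqrt t}\times(t/2,t]$ so that the supremum there is bounded by $Ct^{-d/2}$. The only deviations are cosmetic --- a fixed cutoff $A(\omega)$ instead of the paper's $n^\epsilon$, and dominated convergence in place of an explicit $\epsilon$-splitting in (iii) --- and you in fact supply more detail than the paper on the recurrence/transience consequence, which the paper merely asserts.
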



Similar results as Corollary~\ref{cor:q-estimates}\eqref{cor:green1}\eqref{cor:green2} are also obtained recently for the conductance model \cite{ADS18}.

The organization of this paper is as follows. In Section~\ref{sec:vd}, we prove a parabolic volume-doubling property and an $A_p$ inequality for $\rho_\omega$, and obtain a new proof of the PHI for $\mc L_\omega$. In Section~\ref{sec:near-bdry}, we establish estimates of $\mc L_\omega$-harmonic functions near the boundary, showing both the interior elliptic-type and boundary PHI's. We prove the PHI for the adjoint operator (Theorem~\ref{thm-ah}) in Section~\ref{sec:proof-of-ah}. Finally, with the adjoint PHI, we prove Theorems~\ref{thm:llt}, \ref{thm:hke}, and Corollary~\ref{cor:q-estimates} in Section~\ref{sec:proof-llt-hke}. Section~\ref{sec:auxiliary-prob} contains probability estimates that are used in  previous sections. 
Some estimates and standard arguments can be found in the Appendix of the longer arXiv version \cite{arXiv} of this paper.

\section{A local volume-doubling property and its consequences}\label{sec:vd}

The purpose of this section is to obtain a parabolic volume-doubling property (Theorem~\ref{thm:vd}) and a negative moment estimate (Theorem~\ref{thm:ap-property}) for the density $\rho_\omega$.  The proof relies crucially on a volume-doubling property  for the hitting probabilities restricted in a finite ball (Theorem~\ref{thm:prob_doubling}), which is an improved version of \cite[Theorem 1.1]{SY} by Safonov and Yuan in the PDE setting.

 As a by-product, we obtain a new proof of the classical PHI of Krylov and Safonov \cite{KS80} in the lattice (Theorem~\ref{Harnack}). Our proof, which is of interest on its own, can be viewed as the parabolic version of Fabes and Stroock's \cite{FS84} proof in the elliptic PDE setting.
\subsection{Volume-doubling properties}
For a finite subgraph $D\subset\Z^d$, let
\[\label{page:bdry}
\partial D=\{y\in\Z^d\setminus D: y\sim x \mbox{ for some }x\in D\}, \quad
\bar D:=D\cup\partial D.
\]
For $\ms D\subset \Z^d\times\R$, define
the {\it parabolic boundary} of $\ms D$ as
\[
\partial^\p\ms D:=
\{
(x,t)\notin\ms D: \big(B_{1+\epsilon}(x)\times(t-\epsilon,t]\big)\cap\ms D\neq\emptyset
\mbox{ for all }\epsilon>0
\}.
\]
In the special case $\ms D=D\times[0,T)$ for some finite $D\subset\Z^d$,
it is easily seen that $\partial^\p\ms D=(\partial D\times[0,T])\cup(\bar D\times\{T\})$.
See figure~\ref{fig:p-bdry}.

By the optional stopping theorem, for any $(x,t)\in\ms D\subset\Z^d\times\R$ and any bounded integrable function $u$ on $\ms D\cup\partial^\p\ms D$,  

\begin{equation}\label{representation}
u(x,t)=-E_\omega^{x,t}\left[\int_0^\tau\mc L_\omega u(\hat X_r)\dd r\right]+
E^{x,t}_\omega[u(\hat X_\tau)],
\end{equation}
where $\tau=\inf\{r\ge 0: (X_r,T_r)\notin \ms D\}$. 

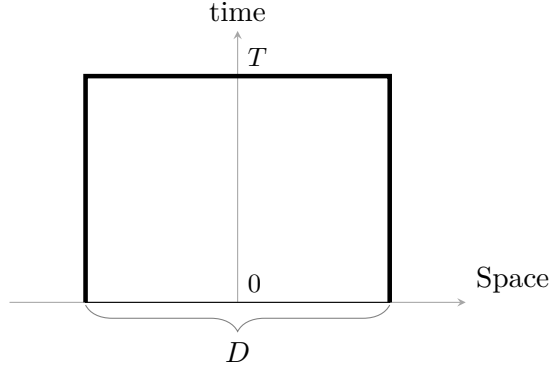
\begin{figure}[H]
\centering
\pgfmathsetmacro{\R}{2}
   \begin{tikzpicture}
    \draw[gray!70,->, >=stealth] (-1-\R,0)--(\R+1,0) node[black,above right] {Space};
    \draw[gray!70,->,>=stealth] (0,0) node[black, above right] {\small $0$}--(0,3.6) node[black,above] {time};
    \draw (-\R,0) rectangle (\R,3);
    \draw[line width=0.6mm] (-\R,0) to (-\R,3) to (\R,3) to (\R,0);
    \draw[gray,decorate,decoration={brace,amplitude=10pt,mirror},yshift=-1pt]
(-\R,0) -- (\R,0) node[black,below,midway, yshift=-10pt] {$D$};
    \node[above right] at (0,3) {\small $T$};
   \end{tikzpicture}
\caption{The parabolic boundary of $D\times[0,T)$.\label{fig:p-bdry}}   
\end{figure}

\begin{theorem}\label{thm:vd}
$\mb P$-almost surely, for every $r\ge 1/2$,
\[
\sup_{t:|t|\le r^2}\rho_\omega(B_{2r},t)\le C\rho_\omega(B_r,0).
\]
\end{theorem}
In PDE setting, this type of estimate was first established by Fabes and Stroock \cite{FS84} for adjoint solutions of non-divergence form elliptic operators, and then generalized by Escauriaza \cite{Esc00} to the parabolic case. 

 To obtain Theorem~\ref{thm:vd}, a crucial estimate is a volume-doubling property (Theorem~\ref{thm:prob_doubling}) for the hitting measure of the random walk, which we will prove by adapting some ideas of  Safonov and Yuan \cite[Theorem 1.1]{SY} in the PDE setting. Note that our proof of Theorem~\ref{thm:prob_doubling} relies on a probabilistic estimate (Lemma~\ref{lem1}) rather than the Harnack inequality (Theorem~\ref{Harnack}).
 
For any $A\subset\Z^d$, $s\in\R$, define the stopping time
\begin{equation}
\label{def:st-a-s}
\Delta(A,s)=\inf\{t\ge 0:\hat X_t\notin A\times(-\infty,s)\}.
\end{equation}

\begin{theorem}\label{thm:prob_doubling}
Assume $\omega\in\Omega_\kappa$. There exists $k_0=k_0(d,\kappa)$ such that for any $k\ge k_0$, $m\ge 2$, $r,s>0$ and any $y\in B_{k\sqrt s}$, we have
\[
P_\omega^{y,0}(X_{\Delta(B_{mk\sqrt s},s)}\in B_{2r})\le C_k P_\omega^{y,0}(X_{\Delta(B_{mk\sqrt s}, s)}\in B_r).
\]
Here $C_k$ depends only on $(k,d,\kappa)$. In particular, for any $k\ge 1$, $|y|\le k\sqrt s$,
\[
P_\omega^{y,0}(X_s\in B_{2r})\le C_k P_\omega^{y,0}(X_s\in B_r).
\]
\end{theorem}

\begin{proof}[Proof of Theorem~\ref{thm:prob_doubling}:]
Since $B_1=\{0\}$, we only consider $r\ge 1/2$. Fix $s$, $r$. For $\rho\ge 0$, $k\ge k_0$, define $L_{k,\rho}=B_{k\rho}\times\{s-\rho^2\}$ and 
\[
D_{k,\rho}=\bigcup_{R\le\rho}L_{k,R}=\{(x,t)\in\Z^d\times(-\infty,s]: |x|/k\le \sqrt{s-t}\le\rho\}.
\]
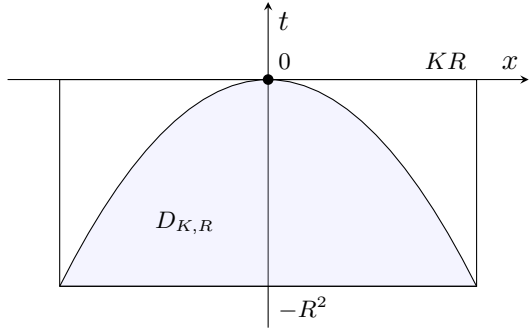
\begin{figure}[h]
  \centering
  \begin{tikzpicture}
  [
  declare function={
    f(\t) = -\t^2/(\K*\K);
  }
]
\pgfmathsetmacro{\R}{2}
\pgfmathsetmacro{\K}{2}
\pgfmathsetmacro{\Rho0}{0.8*\R}
\pgfmathsetmacro{\Rho}{0.9*\R}
    \begin{axis}[
        xmin=-\K*\R-1, xmax=\K*\R+1,
        ymin=-\R*\R-0.8, ymax=1.5,
        ticks=none, 
              xlabel=$x$,
            ylabel=$t$,
        axis lines=middle,
        unit vector ratio=1 1
      ]  
      \addplot[samples=30,domain=-\K*\R:\K*\R, name path=A] {f(x)};
      \addplot[domain=-\K*\R:\K*\R,name path=D](x,-\R*\R) node[pos=0.5, below right]{\footnotesize $s-\rho^2$};
       \addplot[fill=blue!20, fill opacity=0.2] fill between [of =A and D];
      \draw (-\K*\R,0) rectangle (\K*\R,-\R*\R);   
      \node[above left] at (\K*\R,0) {\footnotesize $k\rho$};
 \fill (0,0) coordinate (o) circle (2pt);
\node at (o)[above right] {\footnotesize $(0,s)$};
\node at (60:-0.8*\R*\R) {\footnotesize $D_{k,\rho}(s)$};
    \end{axis}
  \end{tikzpicture}
\caption{The shaded region is $D_{k,\rho}$.\label{fig:dkr}}  
\end{figure}
Note that $k_0>6$ is a large constant to be determined.
For any $R\le\rho$, by Lemma~\ref{lem1} below, there exists $\alpha_k>0$ depending on $(k,\kappa,d)$ such that
\begin{align*}
\min_{(x,t)\in L_{k,R}}P^{x,t}_\omega(X_{\Delta(B_{mk\rho},s)}\in B_r)
&\ge (\tfrac{r}{2kR}\wedge\tfrac{1	}{2})^{\alpha_k}.
\end{align*}	
Let $\beta_k>1$ be a large constant to be determined later. Then, letting
\[
v_\rho(\hat x)=(8k)^{\alpha_k}(\beta_k+1)P_\omega^{\hat x}(X_{\Delta(B_{mk\rho},s)}\in B_r)-P_\omega^{\hat x}(X_{\Delta(B_{mk\rho},s)}\in B_{2r}),
\]
we get $\inf_{D_{k,R}}v_\rho\ge (\beta_k+1)(\tfrac{4r}{R}\wedge2k)^{\alpha_k}-1$ for $0<R\le \rho$.  In particular, 
$\inf_{D_{k,(4r)\wedge\rho}}v_\rho\ge \beta_k$ for $\rho\ge 0$. 
Set 
\[
R_\rho=\sup\{R\in[0,m\rho]:\inf_{D_{k,R}}v_\rho\ge 0\}.
\]
Clearly, $R_\rho\ge (4r)\wedge\rho$.  We will prove that 
\begin{equation}\label{eq:R-rho}
R_\rho\ge \rho \text{ for all }\rho>0.
\end{equation}

Assuming \eqref{eq:R-rho} fails, then $R_\rho<\rho$ for some $\rho>4r$. We will show that this is impossible via contradiction.
First, for such $\rho>4r$, we claim that there exists a constant $\gamma=\gamma(d,\kappa)>0$ such that
\begin{equation}
\label{eq:lowb_vrho}
\min_{L_{1,R}}v_\rho\ge \beta_k(\frac{r}{R})^\gamma  
\quad\text{ for all }R\in[2r,R_\rho).
\end{equation}
By Lemma~\ref{lem1}, 
$g(R, R/2):=
\min_{x\in B_R}P_\omega^{x,s-R^2}(X_{\Delta(B_{2R},s-(R/2)^2)}\in B_{R/2})\ge C$. 
Further, by the Markov property and that $R_\rho<\rho$, for $R\in[2r,R_\rho)$, 
\begin{align*}
&\min_{x\in B_R}P_\omega^{x,s-R^2}(X_{\Delta(D_{k,m\rho},s-(R/2^n)^2)}\in B_{R/2^n})\\
&\ge
g(R,R/2)\cdots g(R/2^{n-1},R/2^n)\ge C^n.
\end{align*}	
Since $v_\rho(\hat X_t)$ is a martingale in $B_{mk\rho}\times(-\infty,s)$ and that $v_\rho\ge 0$ in $D_{k,R_\rho}$, 
choosing $n$ such that $R/2^n\le r<R/2^{n-1}$, the above inequality yields
\begin{align*}
v_\rho(x,s-R^2)
&\ge P_\omega^{x,s-R^2}(X_{\Delta(D_{k,m\rho},s-(R/2^n)^2)}\in B_{R/2^n})\inf_{D_{k,r}}v_\rho\\
&\ge C^n\beta_k\ge \beta_k(\frac{r}{R})^c
\end{align*}	
for $R\in[2r,R_\rho)$ and $x\in B_R$.  Display \eqref{eq:lowb_vrho} is proved.

Next, we will show that for $R\in[2r,R_\rho)$, 
\begin{equation}
\label{eq:upperb-frho}
f_\rho(R):=\sup_{\partial B_{kR}\times[s-R^2,s)}v_\rho^-\le (\frac{r}{R})^{-c/\log q_k},
\end{equation}
where $v_\rho^-=\max\{0,-v_\rho\}$, $q_k=1-\tfrac{c_1}{k}$ and $c_1>0$ is a constant to be determined. 
Noting that $v_\rho^-=0$ in $D_{k,R_\rho}\cup (B_{2r}^c\times\{s\})$ and that $v_\rho^-(\hat X_t)$ is a sub-martingale, we know that $f_\rho(R)$ is a decreasing function for $R\in(\frac{2r}{k},R_\rho)$.
 Further,
for any $(x,t)\in \partial B_{kR}\times[s-R^2,s)$ with $q_k R\in(\frac{2r}{k},R_\rho)$, by the optional stopping lemma, in view of Theorem~\ref{thm:fluctuation} below,
\begin{align*}
v_\rho^-(x,t)
&\le 
P_\omega^{x,t}(X_a\in\partial B_{kq_kR}\text{ for some }a\in[0,R^2])f(q_kR)\\
&\le 
P_\omega^{x,t}\left(\sup_{0\le a\le R^2}|X_a-x|\ge (1-q_k)kR \right)f(q_kR)\\
&\stackrel{Theorem~\ref{thm:fluctuation}}{\le} C(e^{-cc_1^2}+e^{-c_1 R})f(q_kR)
\le f(q_kR)/2,
\end{align*}
if $c_1$ is chosen to be big enough.
So $f_\rho(R)\le f_\rho(q_kR)/2$. Let $n\ge 0$ be the integer such that $q_k^{n+1}R<r\le q_k^n R$. We conclude that for $R\in[2r,R_\rho)$,
\[
f_\rho(R)\le 2^{-n}f_\rho(q_k^nR)\le \left(
\frac{r}{R}
\right)^{-c/\log q_k}f_\rho(r).
\]
Inequality \eqref{eq:upperb-frho} then follows from the fact $v_\rho^-\le 1$.

Finally, if $R_\rho<\rho$ for some $\rho>4r$,
 let $\tau=\inf\{t\ge0:\hat X_t\notin (B_{mk\rho}\times(-\infty,s))\setminus(\bar B_{kR/2}\times[s-(R/2)^2,s))\}$. Since $v_\rho=0$ on $\partial^\p (B_{mk\rho}\times(-\infty,s))\setminus(B_{2r}\times\{s\})$, by the optional stopping lemma, for $R\in[R_\rho,2R_\rho)$ and  $x\in B_{kR}$, 
\begin{align*}
&v_\rho(x,s-R^2)\\
  &=E_\omega^{x,s-R^2}[v_\rho(\hat X_\tau)\mathbbm{1}_{\hat X_\tau\in B_{kR/2}\times\{s-(R/2)^2\} \text{ or }\hat X_\tau\in\partial B_{kR/2}\times[s-(R/2)^2,s)}]\\
  &\ge
  P_\omega^{x,s-R^2}(X_{\Delta(B_{2k\rho},s-(R/2)^2)}\in B_{R/2})\min_{L_{1,R/2}}v_\rho-f_\rho(R/2)\\
  &\stackrel{Lemma~\ref{lem1},\eqref{eq:lowb_vrho},\eqref{eq:upperb-frho}}{\ge} 
  A_k\beta_k(\frac{2r}{R})^\gamma-(\frac{2r}{R})^{-c/\log q_k},
  \end{align*}	  
where $A_k$ depends on $(k,\kappa,d)$. Taking $k_0>c_1$ to be big enough such that $-c/\log q_k>\gamma$ for $k\ge k_0$ and choosing $\beta_k>A_k^{-1}$, the above inequality then implies $\inf_{D_{k,2R_\rho}}v\ge 0$, which contradicts our definition of $R_\rho$. Display \eqref{eq:R-rho} is proved, and therefore, $\min_{x\in B_{k\sqrt s}}v_{\sqrt s}(x,0)\ge 0$. The theorem follows.
\end{proof}

\begin{corollary}
\label{cor:space-time-vd}
Let $\omega\in\Omega_\kappa$ and $k_0$ as in Theorem~\ref{thm:prob_doubling}. For any $r>0, k\ge k_0, m\ge 2, s>0$ and $y\in B_{k\sqrt s}$, we have
\[
\sup_{t\ge 0: |t-s|\le r^2}P_\omega^{y,0}(X_{\Delta(B_{mk\sqrt s},t)}\in B_{2r})\le C_k P_\omega^{y,0}(X_{\Delta(B_{mk\sqrt s},s)}\in B_r),
\]
where $C_k$ depends on $(k,\kappa,d)$. In particular, for any $k\ge 1, |y|\le k\sqrt s$,
\[
\sup_{t\ge 0: |t-s|\le r^2}P_\omega^{y,0}(X_t\in B_{2r})\le C_k P_\omega^{y,0}(X_s\in B_r).
\]
\end{corollary}
\begin{proof}
It suffices to consider $r<\sqrt s$, because otherwise, by Lemma~\ref{lem1}, the right side is bigger than a constant.  
When $t\in[0\vee(s-r^2),s]$,
\begin{align*}
&\min_{y\in B_{k\sqrt s}}P_\omega^{y,0}(X_{\Delta(B_{mk\sqrt s},s)}\in B_{4r})\\
&\ge 
\min_{y\in B_{k\sqrt s}}P_\omega^{y,0}(X_{\Delta(B_{mk\sqrt s},t)}\in B_{2r})\min_{x\in B_{2r}}P_\omega^{x,t}(X_{s-t}\in B_{2r}(x))\\
&\stackrel{Lemma~\ref{lem1}}{\ge} 
C \min_{y\in B_{k\sqrt s}}P_\omega^{y,0}(X_{\Delta(B_{mk\sqrt s},t)}\in B_{2r}).
\end{align*}
By Theorem~\ref{thm:prob_doubling}, we can replace $4r$ in the above inequality by $r$.

When $t\in[s,s+r^2]$, for any $y\in B_{k\sqrt s}$,
\begin{align*}
&P_\omega^{y,0}(X_{\Delta(B_{mk\sqrt s},t)}\in B_{2r})\\
&\le\sum_{n=0}^\infty\sum_{x:|x|\in[2^nr,2^{n+1}r)}
P_\omega^{y,0}(X_{\Delta(B_{mk\sqrt s},s)}=x) P_\omega^{x,s}(X_{t-s}\in B_{2r})
\\
&\stackrel{Corollary~\ref{cor:prob-upperb}}{\le }
C\sum_{n=0}^\infty P_\omega^{y,0}(X_{\Delta(B_{mk\sqrt s},s)}\in B_{2^n r})(e^{-c2^nr}+ e^{-c 4^n}).
\end{align*}
Observing that (cf. Theorem~\ref{thm:prob_doubling}) 
\[
P_\omega^{y,0}(X_{\Delta(B_{mk\sqrt s},s)}\in B_{2^n r})
\le C^n P_\omega^{y,0}(X_{\Delta(B_{mk\sqrt s},s)}\in B_{r}),
\]
our proof is complete.
\end{proof}

\begin{proof}[Proof of Theorem~\ref{thm:vd}:]
Let $k_0\ge 2$ be as in Theorem~\ref{thm:prob_doubling}. Recall 
$\bar\omega_t, Q_r, \Delta$ in \eqref{eq:def-omegabar}, \eqref{def:parab-ball}, \eqref{def:st-a-s}.
For fixed $\xi\in\Omega_\kappa$, define a probability measure $\Q_R=\Q_R^\xi$ on $\{\theta_{\hat x}\xi: \hat x\in Q_R\}$ such that for any bounded measurable $f\in\R^{\Omega}$, 
\[
E_{\Q_R}[f]=\frac{1}{C_R}E_\xi^{0,-R^2}
[\int_0^{\Delta(B_{2k_0 R},R^2)}f(\bar\xi_s)\mathbbm{1}_{\hat X_s\in Q_R}\dd s],
\]
where $C_R$ is a renormalization constant such that $\Q_R$ is a probability. 

First, we claim that $C_R\asymp R^2$. Clearly, $C_R\le 2R^2$. On the other hand,
\begin{align*}
C_R &= E_\xi^{0,-R^2}[\int_0^{\Delta(B_{2k_0 R},R^2)}\mathbbm{1}_{\hat X_s\in Q_R}\dd s]\\
&\ge P_\xi^{0,-R^2}(X_{\Delta(B_R,0)}\in B_{R/2})\min_{x\in B_{R/2}}E_\xi^{x,0}[\Delta(B_R,R^2)]\\
&\stackrel{Lemma~\ref{lem1}}{\ge} C\min_{x\in B_{R/2}}E_\xi^{x,0}[\Delta(B_R,R^2)].
\end{align*}
Since $|X_t-X_0|^2-\tfrac d\kappa t$ is a supermartingale, denoting $\tau=\Delta(B_R,R^2)$,  we have
$0\ge E_\xi^{x,0}[|X_\tau-x|^2-\tfrac d\kappa \tau]$. Hence for any $x\in B_{R/2}$, 
\[
E_\xi^{x,0}[\tau]\ge cE_\xi^{x,0}[|X_\tau-x|^2]
\ge CR^2 P_\xi^{x,0}(\tau<R^2),
\]
which implies $E_\xi^{x,0}[\tau]\ge cR^2$. Thus $C_R\ge CR^2$ and so $C_R\asymp R^2$.

Next, since $\Omega$ is pre-compact, by Prohorov's theorem, there is a subsequence of $\Q_R$ that converges weakly, as $R\to\infty$, to a probability measure $\tilde\Q$ on $\Omega$. We will show that $\tilde \Q$ is an invariant measure of the process $(\bar\omega_t)$. Indeed, let $p_R=p_{R,\xi}$ denote the kernel $p_R(\hat x;y,s):=P_\xi^{\hat x}(\hat X_{\Delta(B_{2k_0 R}, s)}=(y,s))$. 
Then, letting $\ms Lf(\omega)=\sum_{e}\omega_0(0,e)[f(\theta_{e,0}\omega)-f(\omega)]+\partial_t f(\theta_{0,t}\omega)|_{t=0}$ denote the generator of the process $(\bar\omega_t)$, and $\hat y:=(y,s)$, we have
\begin{equation}\label{eq: QR}
E_{\Q_R}[\ms L f(\omega)]
=C_R^{-1}\sum_{y\in B_R}\int_0^{R^2}p_R(0,-R^2;\hat y)\ms Lf(\theta_{\hat y}\xi)\dd s
\end{equation}
for $f\in\dom(\ms L)$, where $\dom(\ms L)$ denotes the domain of the generator $\ms L$.
Note that similar to $\rho_\omega$, the function  $v(\hat x)=p_R(0,-R^2;\hat x)$ satisfies the equality \eqref{rho-invariance}: $\ms L^Tv(\hat x)=0$ for $\hat x\in B_{2R}\times(-R^2,R^2)$, where $\ms L^T v(\hat x)=\sum_y v(y,t)\omega_t(y,x)-\partial_t v(x,t)$. Hence, using integration by parts,  
\begin{align}\label{eq:qr-difference}
&\Abs{\sum_{y\in B_R}\int_0^{R^2}p_R(0,-R^2;\hat y)\ms Lf(\theta_{\hat y}\xi)\dd s}
\nn\\&\le 
C\norm{f}_\infty\int_0^{R^2}\sum_{y\in \bar B_R\setminus\mathring{B}_R}p_R(0,-R^2;\hat y)\dd s+2\norm{f}_\infty
\end{align}
for all $f\in\dom(\ms L)$, 
where $\overset{\circ}{B}_R=\{x\in B_R: x\not\sim\partial B_R\}$. Observe that 
\[
u(\hat x)=\int_0^{R^2}\sum_{y\in \bar B_R\setminus\mathring{B}_R}p_R(\hat x;\hat y)\dd s=E^{\hat x}_\xi[\int_0^{\Delta(B_{2k_0 R},R^2)}\mathbbm{1}_{\hat X_t\in \bar B_R\setminus\mathring{B}_R\times(0,R^2)}]\dd t
\]
satisfies $\mc L_\xi u(\hat x)=-\mathbbm{1}_{\hat x\in \bar B_R\setminus\mathring{B}_R\times[0,R^2)}$ for $\hat x\in\ms D:= B_{2k_0R}\times[-R^2,R^2)$ and $u|_{\partial^\p\ms D}=0$. By the parabolic maximum principle \cite[Theroem A.3.1]{arXiv}, we get
$u(0,-R^2)\le CR^{(2d+1)/(d+1)}$.  Hence, by \eqref{eq: QR}, \eqref{eq:qr-difference}, and $C_R\asymp R^2$, 
\[
\lim_{R\to\infty}E_{\Q_R}[\ms Lf]=0 \qquad
\forall \text{ bounded function }f\in\dom(\ms L),
\]
and so $E_{\tilde\Q}[\ms L f]=0$, which implies that $\tilde\Q$ is an invariant measure of $(\bar\omega_t)$. 

Furthermore, we will show that $\tilde\Q\ll\mb P$. Notice that the function
\[
w(\hat x):=E_\xi^{\hat x}
[\int_0^{\Delta(B_{2k_0 R},R^2)}f(\bar\xi_s)\mathbbm{1}_{\hat X_s\in Q_R}\dd s]
\]
satisfies $\mc L_\xi w(\hat x)=-f(\theta_{\hat x}\xi)\mathbbm{1}_{\hat x\in Q_R}$ in $\ms D$ and $w|_{\partial^\p\ms D}=0$. By \cite[Theorem A.3.1]{arXiv}, for any bounded measurable $f\in\R^\Omega$,
\[
E_{\Q_R}[f]\le CR^{-2}w(0,-R^2)
\le 
C\left[\int_0^{R^2}\sum_{x\in B_R}|f(\theta_{x,t}\xi)|^{d+1}\dd t\bigg/(R^{2+d})
\right]^{1/(d+1)},
\]
which, by the multi-dimensional ergodic theorem, yields $E_{\tilde\Q}[f]\le C\norm{f}_{L^{d+1}(\mb P)}$ as we take $R\to\infty$. So $\tilde \Q\ll\mb P$. By Theorem~\ref{thm:recall}, $\tilde \Q=\Q$. 

Finally, since $\Q_R\Rightarrow\Q$, for any bounded measurable $f\in\R^\Omega$, 
\begin{align}\label{eq:formula-rho-br}
&E_{\mb P}[\rho_\omega(B_r,t)f]=\sum_{x\in B_r}E_{\Q}[f(\theta_{x,-t}\omega)]\\
&=\lim_{R\to\infty}\sum_{x\in B_r,y\in B_R}\int_0^{R^2}
P_\xi^{0,-R^2}(X_{\Delta(B_{2k_0 R},s)}=y)f(\theta_{x+y,s-t}\xi)\dd s/C_R.\nn
\end{align}	
Hence, for  any  measurable function $f\ge 0$, $|t|\le r^2$, and $\mb P$-a.a. $\xi$,
\begin{align*}
&E_{\mb P}[\rho_\omega(B_r,0)f]\\
&\ge 
\lim_{R\to\infty}\sum_{z\in B_{R-r}}\int_0^{R^2}
P_\xi^{0,-R^2}(X_{\Delta(B_{2k_0 R},s)}\in B_r(z))f(\theta_{z,s}\xi)\dd s/C_R\\
&\stackrel{Corollary~\ref{cor:space-time-vd}}{\ge} 
C\lim_{R\to\infty}\sum_{z\in B_{R-r}}\int_0^{R^2}
P_\xi^{0,-R^2}(X_{\Delta(B_{2k_0 R},s+t)}\in B_{2r}(z))f(\theta_{z,s}\xi)\dd s/C_R\\
&\ge
C \lim_{R\to\infty}\sum_{x\in B_{2r},y\in B_{R-3r}}\int_{r^2}^{R^2-r^2}
P_\xi^{0,-R^2}(X_{\Delta(B_{2k_0 R},s)}=y)f(\theta_{x+y,s-t}\xi)\dd s/C_R\\
&\stackrel{\eqref{eq:formula-rho-br}}{=}CE_{\mb P}[\rho_\omega(B_{2r},t)f].
\end{align*}	
Since $f$ is arbitrary, the theorem follows.
\end{proof}

\begin{remark}
By Theorem~\ref{thm:vd}, for any $r\ge 1$,
\begin{equation}\label{eq:rho-asymp}
\frac{c}{r^2}\int_0^{r^2} \rho_\omega(B_r, s)\dd s
\le 
\rho_\omega(B_r,0)
\le 
\frac{C}{r^2}\int_0^{r^2} \rho_\omega(B_r, s)\dd s.
\end{equation}
Hence, by the multi-dimensional ergodic theorem, for $\mb P$-almost every $\omega$,
\begin{equation}\label{eq:rho-ergodic}
c\le \varliminf_{r\to\infty}\frac{1}{|B_r|}\rho_\omega(B_r,0)
\le
\varlimsup_{r\to\infty}\frac{1}{|B_r|}\rho_\omega(B_r,0)
\le C.
\end{equation}
\end{remark}

\subsection{$A_p$ property and a new proof of the PHI for $\mc L_\omega$}\label{sec:ap&harnack}
We endow $\Z^d$ with the discrete topology and counting measure, and equip $\Z^d\times\R$ with the corresponding product topology and measure (where $\R$ has the usual topology and measure).  For $\ms D\subset\Z^d\times\R$, let $|\ms D|$ be its measure, and denote
the integration over $\ms D$  by $\int_{\ms D}f$ . For instance,  
\begin{equation}\label{eq:def-integral}
\int_{B_R\times[0,T]}f=\sum_{x\in B_R}\int_0^T f(x,t)\dd t,
\end{equation}
and $|\ms D|=\int_{\ms D}1$. 
For $p>0$, we define a norm 
\begin{equation}
\label{eq:def-norm}
\norm{f}_{\ms D, p}:=(\int_{\ms D}|f|^p/|\ms D|)^{1/p}.
\end{equation}

A function $v\in\R^{\Z^d\times\R}$ is called an {\it adjoint solution} of $\mc L_\omega$ in $\ms D=B_R\times[T_1,T_2)$ if 
$\int_{\ms D}v\mc L_\omega\phi=0$ 
for any test function $\phi(x,t)\in \R^{\Z^d\times\R}$ that is supported on $B_{R}\times(T_1,T_2)$ and smooth in $t$.

For any function $w$ defined on $E\subset\Z^d\times\R$, we write
$w(E):=\int_E w$.

\begin{lemma}\label{lem:pre-RH}
Recall $\norm{\cdot}_{\ms D,p}$ in \eqref{eq:def-norm} and the parabolic balls $Q_r$ in \eqref{def:parab-ball}. 
Let $\omega\in\Omega_\kappa$. For any non-negative adjoint solution $v$ of $\mc L_\omega$ in $Q_{2r}$, $r>10$,
\[
\norm{v}_{Q_r,(d+1)/d}\le C\norm{v}_{Q_{3r/2},1}.
\]
\end{lemma}
\begin{proof}
Denote the continuous balls of radius $r$ by
\begin{equation}\label{eq:def-continuousball}
\mc O_r=\{x\in\R^d:|x|_2<r\}
\quad\text{ and }\quad
\mc O_r(y)=y+\mc O_r, \quad y\in\R^d.
\end{equation}
Let $\phi_0\ge 0$ be a smooth (with respect to $t$) function supported on  $\mc O_{3/2}\times[0,9/4)$ with ${\phi_0}|_{\mc O_1\times[0,1)}=1$ and set $\phi(x,t)=\phi_0(x/r,t/r^2)$. Let $f$ any non-negative smooth function supported on $Q_{r}$ with $\norm{f}_{Q_r,d+1}=1$ and let $u\in[0,\infty)^{\Z^d\times\R}$ be supported on $Q_{9r/5}$ with $L_\omega u=-f$ in $Q_{9r/5}$. Since
\begin{align*}
0&=\int v\mc L_\omega(\phi u)=\int v\phi\mc L_\omega u+\int v u\mc L_\omega\phi+\sum_{x,y}\int_\R v(x,t)\omega_t(x,y)\nabla_{x,y}u\nabla_{x,y}\phi\dd t,
\end{align*}	
where  $\nabla_{x,y}u(\cdot,t):=u(x,t)-u(y,t)$ and (cf. \eqref{eq:def-integral}) $\int=\int_{\Z^d\times\R}$, we get
\[
\int v\phi f=\int v u\mc L_\omega\phi+\sum_{x,y}\int_\R v(x,t)\omega_t(x,y)\nabla_{x,y}u\nabla_{x,y}\phi\dd t=:\rom{1}+\rom{2}.
\] 
By the maximum principle (\cite[Theorem~A.3.1]{arXiv}), $u\le Cr^2\norm{f}_{Q_r,d+1}\le Cr^2$. Thus, using $|\mc L_\omega\phi|\le C/r^2$, we get $|\rom{1}|\le Cv(Q_{3r/2})$.
Further, noting that
\begin{align*}
|\sum_{x,y}\int_\R v(x,t)\omega_t(x,y)(\nabla_{x,y}u)^2\dd t|
&=|\int v\mc L_\omega (u^2)-2\int vu\mc L_\omega u|\\
&=2|\int vu\mc L_\omega u|
\le Cr^2\int vf,
\end{align*}	
we have
\begin{align*}
|\rom{2}|&\le \left(\sum_{x,y}\int_\R v(x,t)\omega_t(x,y)(\nabla_{x,y}\phi)^2\dd t\right)^{1/2}
\left(\sum_{x,y}\int_\R v(x,t)\omega_t(x,y)(\nabla_{x,y}u)^2\dd t\right)^{1/2}\\
&\le 
Cv(Q_{3r/2})^{1/2}(\int vf)^{1/2}.
\end{align*}	
Hence we obtain
$\int v f\le \int v\phi f\le Cv(Q_{3r/2})+Cv(Q_{3r/2})^{1/2}(\int vf)^{1/2}$ and so $v(Q_{3r/2})\ge c\int vf$. The lemma follows by taking supremum over all $f$ with $\norm{f}_{Q_r,d+1}=1$.
\end{proof}

For $\hat x=(x_1,\ldots,x_d, t)$, define {\it parabolic cubes} with side-length $r>0$ as
\begin{equation}\label{eq:def-parakub}
K_r(\hat x)=(\prod_{i=1}^d[x_i-r,x_i+r)\cap\Z^d)\times[t,t+r^2), 
\quad
K_r=K_r(\hat 0).
\end{equation}
We say that a function $w\in\R^{\Z^d\times\R}$ satisfies the {\it reverse H\"older inequality} $RH_q(\ms D)$, $1<q<\infty$, if for any parabolic subcube $K$ of $\ms D$,
\[\tag{$RH_q$}
\norm{w}_{K,q}\le C\norm{w}_{K,1}.
\]
We say that $w$ belongs to the {\it $A_p(\ms D)$ class} (with  $A_p$ bound  $A$), $1<p<\infty$, if there exists $A<\infty$  such that, for any parabolic subcube $K$ of $\ms D$,
\[\tag{$A_p$}
\norm{w}_{K, 1}\norm{1/w}_{K, 1/(p-1)}\le A 
\]
Recall the stopping time $\Delta$ in \eqref{def:st-a-s}. For $R>0$, $\hat y\in B_{2R}\times\R$, let 
\begin{equation}\label{eq:def-gr}
g_R(\hat y;x,t)=P_\omega^{\hat y}(X_{\Delta(B_{2R},t)}=x).
\end{equation}

\begin{corollary}\label{cor:reverse-holder}
Let $\omega\in\Omega_\kappa, R>0$. Recall $k_0$ in Theorem~\ref{thm:prob_doubling}. 
\begin{enumerate}[(i)]
\item $\rho_\omega$ satisfies $RH_{(d+1)/d}(\Z^d\times\R)$.
\item 
For any $y\in B_R$, $v_y(\hat x)=g_R(y,0;\hat x)$ satisfies 
\[RH_{(d+1)/d}(B_{R/2}\times[R^2/(2k_0^2),R^2/k_0^2]).\]
\end{enumerate}
\end{corollary}
\begin{proof}
Note that $\rho_\omega$, $v_y$ are adjoint solutions with volume-doubling properties Theorem~\ref{thm:vd} and Corollary~\ref{cor:space-time-vd}. The corollary follows from Lemma~\ref{lem:pre-RH}.
\end{proof}
 
\begin{theorem}
\label{thm:ap-property}
Let $\omega, R, k_0, v_y$ be the same as in Corollary~\ref{cor:reverse-holder}. There exist $p=p(d,\kappa)>1, A=A(d,\kappa)$ such that, for $\mb P$-a.e. $\omega$, 
\begin{enumerate}[(a)]
 \item\label{item:neg-moment} $\rho_\omega\in A_p(\Z^d\times\R)$ with $A_p$ bound $A$. As a consequence,
 \[E_{\mb P}[\rho_\omega^{-1/(p-1)}]<\infty;\]
 \item For any $y\in B_R$, $v_y$ belongs to $A_p(B_{R/2}\times[R^2/(2k_0^2), R^2/k_0^2])$ with $A_p$ bound $A$. As a consequence, for any $E\subset K$ where $K$ is a parabolic subcube of $B_{R/2}\times[R^2/(2k_0^2), R^2/k_0^2]$,
 \begin{equation}\label{eq:A-infty}
\frac{g_R(y,0;E)}{g_R(y,0;K)}\ge C\left(\frac{|E|}{|K|}\right)^c.
 \end{equation}
 \end{enumerate} 
\end{theorem}

\begin{proof}
 See  \cite[Section~A.6]{arXiv}.
\end{proof}

\begin{remark}
The fact that $(RH)$ implies $(A_p)$ is a classical result in harmonic analysis. See e.g, \cite[pg.246-249]{CF74}, \cite[pg. 213-214]{Stein}.
In the elliptic non-divergence form PDE setting, the $A_p$ inequality for adjoint solutions was proved by Bauman \cite{Baum84}, and estimate of form \eqref{eq:A-infty} was used by Fabes and Stroock \cite{FS84} to obtain a short proof of the elliptic Harnack inequality. 
\end{remark}
In what follows, using \eqref{eq:A-infty}, we will obtain a new proof of the parabolic Harnack inequality (Theorem~\ref{Harnack}). Our proof follows the ideas of \cite{FS84}. Note that 
in our parabolic setting, the local volume-doubling property (Corollary~\ref{cor:space-time-vd}) played a crucial role in the proof of \eqref{eq:A-infty}.
\begin{theorem}[PHI for $\mc L_\omega$]\label{Harnack}
Assume $\omega\in\Omega_\kappa$ and $\theta>0$.
Let $u$ be a non-negative function that satisfies $\mc L_\omega u= 0$ in $B_R\times(0, \theta R^2)$. Then, for $0<\theta_1<\theta_2<\theta_3<\theta$, there exists a constant $C=C(\kappa,d,\theta_1,\theta_2,\theta_3,\theta)$ such that
\[
\sup_{B_{R/2}\times(\theta_2 R^2,\theta_3 R^2)}u\le C\inf_{B_{R/2}\times[0, \theta_1R^2)}u.\tag{PHI}
\]
\end{theorem}
We remark that in discrete time setting, (PHI) is obtained by Kuo and Trudinger for the so-called {\it implicit form} operators, see \cite[(1.16)]{KT98}.

\begin{proof}
[Proof of Theorem~\ref{Harnack}]
Let $\ell_0=1/k_0^2$ and $\ms D=\{x:|x|_\infty<R/\sqrt d\}\times[\ell_0R^2/2,\ell_0 R^2]$.
We only prove a weaker version $\sup_{\ms D}u\le C\min_{x\in B_{R/\sqrt{4d}}}u(x,0)$.
 The theorem then follows by iteration. Indeed, assume $\min_{x\in B_{R/\sqrt{4d}}}u(x,0)=u(y,0)=1$ for $y\in B_{R/\sqrt{4d}}$. Let $E_\lambda=\{\hat x \in\ms D:u(\hat x)\ge \lambda\}$. By Lemma~\ref{lem1}, $g_R(y,0;\ms D)>CR^2$. Moreover, for $s\in [\ell_0R^2/2,\ell_0R^2]$, $1=u(y,0)\ge \lambda g_R(y,0;E_\lambda\cap\{(x,t):t=s\})$, and so
 \[
 1\ge C \lambda g_R(y,0;E_\lambda)/R^2\stackrel{Theorem~\ref{thm:ap-property}(b)}{\ge} C\lambda(|E_\lambda|/|\ms D|)^c.
 \] 
Hence $|E_\lambda|/|\ms D|\le C\lambda^{-\gamma}$ for some $\gamma>0$. Therefore, for $0<p<\gamma/2$,
\begin{align*}
\norm{u}_{\ms D,p}\le \left[1+p\int_1^\infty \lambda^{p-1}|E_\lambda|/|\ms D|\dd \lambda\right]^{1/p}<C'=C'\min_{x\in B_{R/\sqrt d}}u(x,0)<\infty.
\end{align*}
This inequality, together with \cite[Theorem~A.4.1]{arXiv}, completes our proof.
\end{proof}


\section{Estimates of solutions near the boundary}\label{sec:near-bdry}
The purpose of this section is to establish estimates of $\mc L_\omega$-harmonic functions near the parabolic boundary. For $x\in\Z^d, A\subset\Z^d$, let
\[
\dist(x, A):=\min_{y\in A}|x-y|_1.
\]
\subsection{An elliptic-type Harnack inequality}
\begin{theorem}[Interior elliptic-type Harnack inequality]\label{thm-eh}
Assume $\omega\in\Omega_\kappa$. Let $R\ge 2$ and $u\ge 0$ satisfies 
\[
\left\{
\begin{array}{rl}
&\mc L_\omega u=0 \quad\mbox{ in }Q_R \\
& u=0 \quad\mbox{ in }\partial B_R\times[0,R^2).
\end{array}
\right.
\]
Then for $0<\delta\le \tfrac{1}{4}$, 
letting $Q^\delta_R:=B_{(1-\delta) R}\times[0, (1-\delta^2)R^2)$,
there exists a constant $C=C(d,\kappa, \delta)$ such that
\[
\sup_{Q^\delta_R}u\le C\inf_{Q_R^\delta}u.
\]
\end{theorem}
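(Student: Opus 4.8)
\emph{Overview and reduction.} The plan is to combine the interior parabolic Harnack inequality (Theorem~\ref{Harnack}), the optional‑stopping identity \eqref{representation} applied to the walk killed on exiting $B_R$, and a boundary growth lemma that uses $u\equiv 0$ on $\partial B_R\times[0,R^2)$. The conceptual point is that Theorem~\ref{Harnack} by itself only controls $u$ on a \emph{late} time–slab by $u$ on an \emph{earlier} one, whereas the elliptic–type statement compares points with arbitrary (even reversed) time order; the missing ``upward in time'' information is supplied by \eqref{representation} and by the vanishing of $u$ on the lateral boundary. First, applying \eqref{representation} with $\tau$ the exit time of $\hat X$ from $B_R\times[0,s)$ and using $u\equiv 0$ there gives, for $t<s<R^2$,
\[
u(x,t)=\sum_{y\in B_R} P_\omega^{x,t}\big(X_{s-t}=y,\ X_r\in B_R\ \forall\, r\le s-t\big)\,u(y,s),
\]
whose total mass is $P_\omega^{x,t}(X_r\in B_R\ \forall\, r\le s-t)\le 1$; hence $t\mapsto\sup_{B_R}u(\cdot,t)$ is nondecreasing on $[0,R^2)$, so in particular $\sup_{Q_R^\delta}u\le\sup_{B_R}u(\cdot,(1-\delta^2)R^2)$.

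\emph{Interior comparison (does not use $u\equiv0$).} Every point of $Q_R^\delta$ lies at distance $\ge\delta R$ from $\partial B_R$. I would chain Theorem~\ref{Harnack} along parabolic cylinders $B_{2\rho_i}(z_i)\times(a_i,a_i+\rho_i^2)\subset Q_R$ of dyadically growing radii, supplemented by the identity above and by the auxiliary probability bounds of Section~\ref{sec:auxiliary-prob} — Doob's submartingale inequality for the martingale $X$ bounding exits from balls (as in \eqref{Hoeffding}), and the lower bound (as in \eqref{lowerb_ur}–\eqref{e42} and Corollary~\ref{cor:space-time-vd}) for the probability that the walk stays in $B_R$ and reaches a central ball by a given time. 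For points whose time coordinate is not bounded below one first runs the representation forward to a fixed time, so that the remaining chain has only $C(d,\kappa,\delta)$ links. This already yields one of the two inequalities, e.g. $\inf_{Q_R^\delta}u\ge c(d,\kappa,\delta)\,u(\hat z)$ for a central reference point $\hat z$.

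\emph{Boundary growth lemma.} For $z$ within $\rho$ of $\partial B_R$ and $t_0\in[0,R^2)$, uniform ellipticity gives $p_0=p_0(d,\kappa)>0$ bounding below the probability that $\hat X$ started at $(z,t_0)$ leaves $B_R$ (necessarily through the lateral boundary, where $u=0$) within time $\rho^2$ and before leaving $B_{2\rho}(z)$; by \eqref{representation}, $u(z,t_0)\le(1-p_0)\sup_{(B_{2\rho}(z)\cap B_R)\times(t_0,t_0+\rho^2)}u$. Iterating over dyadic scales produces a power gain $u(z,t_0)\le C\,(\dist(z,\partial B_R)/r)^{\gamma}\,\sup_{(B_r(z)\cap B_R)\times(t_0,t_0+r^2)}u$ with $\gamma=\gamma(d,\kappa)>0$, valid as long as the cylinders stay inside $Q_R$. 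In particular the near–$\partial B_R$ part of the time slice $\{t=(1-\delta^2)R^2\}$ is killed by the $\gamma$–power, so $\sup_{B_R}u(\cdot,(1-\delta^2)R^2)\le C(d,\kappa,\delta)\sup_{Q_R^\delta}u$.

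\emph{Conclusion and main obstacle.} By monotonicity of $\delta\mapsto Q_R^\delta$ I may assume $\delta$ smaller than a fixed constant. Let the supremum be attained at $(x_0,t_0)\in\overline{Q_R^\delta}$ (interior, at distance $\ge\delta R$ from $\partial B_R$). If $t_0$ is bounded below by a fixed multiple of $R^2$, a downward Harnack chain reaches a fixed central point at cost $C_\delta$, giving $u(x_0,t_0)\le C_\delta\,u(\hat z)$; if $t_0$ is small, the representation runs $u(x_0,t_0)$ forward to a fixed time, the ``interior part'' of that expansion being handled as before and the ``near–$\partial B_R$ part'' bounded, via the growth lemma together with the monotonicity of $\sup_{B_R}u(\cdot,t)$, by a small multiple of $\sup_{Q_R^\delta}u$, which is then absorbed into the left side. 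Combined with $\inf_{Q_R^\delta}u\ge c_\delta\,u(\hat z)$ from the interior comparison (using, where time orders are incompatible, a second central reference point and \eqref{representation} as a bridge), this gives $\sup_{Q_R^\delta}u\le C_\delta\inf_{Q_R^\delta}u$; the $\gamma$–Hölder estimate then follows by the usual iteration. The main difficulty throughout is the parabolic scaling budget: a Harnack chain from radius $\rho_0$ to radius $\rho_1$ costs order $\rho_1^2$ in the time variable, so every cylinder must be kept inside $B_R\times[0,R^2)$, and each case (position near/far from $\partial B_R$, time near/far from $0$) must be routed through whichever of Theorem~\ref{Harnack} (which only transports control downward in time) or \eqref{representation} (which transports it forward) is usable — the vanishing of $u$ on the lateral boundary being precisely what makes the near–$\partial B_R$ contributions negligible so that the budget is never overrun.
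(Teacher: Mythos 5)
Your architecture matches the paper's: \eqref{representation} together with $u\equiv 0$ on the lateral boundary gives the monotonicity of $t\mapsto\sup_{B_R}u(\cdot,t)$, chains of Theorem~\ref{Harnack} handle the interior, and a boundary decay estimate handles the shell near $\partial B_R$. Indeed the paper's proof is exactly this three-step reduction: $\sup_{Q_R^\delta}u\le\sup_{B_R\times\{R^2-\frac14(\delta R)^2\}}u$ by \eqref{representation}, then the Carlson-type estimate \eqref{e25} (Theorem~\ref{thm-Carl}) to pass to $\sup_{B_{(1-\delta)R}\times\{R^2-\frac12(\delta R)^2\}}u$, then a Harnack chain down to $\inf_{Q_R^\delta}u$.

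The gap is in your boundary step. Your dyadic growth lemma yields $u(z,t_0)\le C\left(\dist(z,\partial B_R)/r\right)^\gamma\sup_{(B_r(z)\cap B_R)\times(t_0,t_0+Cr^2)}u$, whose right-hand side is a supremum over a region that still meets $\partial B_R$ and lies \emph{forward} in time; by your own monotonicity observation that supremum dominates $\sup_{B_R}u(\cdot,t_0)$, so the asserted consequence $\sup_{B_R}u(\cdot,(1-\delta^2)R^2)\le C\sup_{Q_R^\delta}u$ does not follow --- the inequality is vacuous unless you can show that the forward supremum is effectively attained away from $\partial B_R$. That is precisely the content of Step 1 of the proof of Theorem~\ref{thm-Carl}: one introduces the weighted supremum $M=\sup\left(d_0(\hat x)/r\right)^\gamma u(\hat x)$, with $d_0$ the distance to the \emph{full} parabolic boundary including the future time face (which is why the Carlson estimate requires the equation to hold for an extra time of order $r^2$ beyond the slice being estimated), shows via the exit-probability bound (Lemma~\ref{lem:prob1}) that $M$ can only be attained where the distance to the lateral boundary is comparable to $d_0$, and then runs an interior Harnack chain of length $\log(r/d_1)$ whose multiplicative cost is absorbed by the weight; this is what fixes $\gamma$ and produces a bound by an \emph{interior} value at an earlier time, as in \eqref{e49}. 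Without it, your concluding paragraph is circular: both the ``downward Harnack chain'' to $\hat z$ (Theorem~\ref{Harnack} only bounds values at later times by values at earlier ones, and $Q_R^\delta$ reaches $t=0$, so no earlier time is available) and the ``bridge'' between your two reference points reduce, via \eqref{representation}, to bounding a supremum over the full ball $B_R$ at a late time slice by an interior value --- which is again the Carlson estimate.
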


\begin{figure}[H]
\centering
\pgfmathsetmacro{\R}{2.5}
\pgfmathsetmacro{\r}{1}
  \begin{tikzpicture}
    \begin{axis}[
        xmin=-\R-.7, xmax=\R+1,
        ymin=-1, ymax=1.8+\R*\R,
        ticks=none, 
              xlabel=$x$,
            ylabel=$t$,
        axis lines=middle,
        unit vector ratio=2 1
      ]  
      \draw (-\R, 0) rectangle (\R,\R*\R);
      \draw[fill=blue!20, fill opacity=0.2] (-\R+\r,0) rectangle (\R-\r,\R*\R-2*\r*\r);
      \node at (0,0) [below right]{$0$};
      \node at (\R,0) [below]{\footnotesize $R$};
      \node at (0,\R*\R)[above right]{\footnotesize $R^2$};
      \node at (\R/3,\R*\R/3) {$Q_R^\delta$};
      \draw[<->, >=stealth] (\R-\r,\R*\R/2)--(\R,\R*\R/2) node[above, midway]{\footnotesize $\delta R$};
      \draw[<->, >=stealth] (-\R/2,\R*\R-2*\r*\r)--(-\R/2,\R*\R) node[midway,right]{\footnotesize $(\delta R)^2$};
     \end{axis} 
  \end{tikzpicture}
\caption{The values of $u$ are comparable inside the region $Q_R^\delta$. \label{fig:etharnack}}
\end{figure}
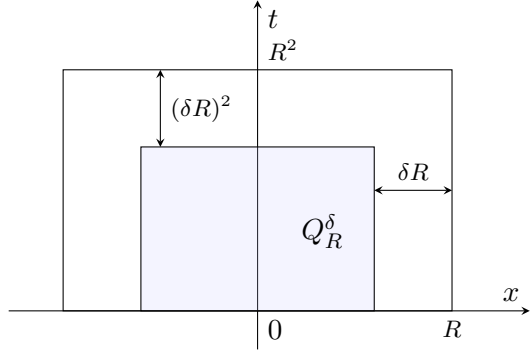

To prove Theorem~\ref{thm-eh}, we need a so-called Carlson-type estimate. For parabolic differential operators in non-divergence form, this kind of estimate was first proved by Garofalo \cite{Garo} (see also \cite[Theorem 3.3]{FSY}). 

\begin{theorem}\label{thm-Carl}
Assume $\omega\in\Omega_\kappa$, $R>2r>0$. Then for any function $u\ge 0$ that satisfies
\[
\left\{
\begin{array}{lr}
\mc L_\omega u=0 &\text{ in } (B_R\setminus\bar B_{R-2r})\times[0,3r^2)\\
u=0 &\text{ on }\partial B_R\times[r^2,3r^2)\qquad
\end{array},
\right.
\]
with the convention $\sup\emptyset=-\infty$, we have 
\begin{equation}\label{e25}
\sup_{(B_R\setminus B_{R-r})\times[r^2,2r^2)}u\le C\min_{y\in \partial B_{R-r}}u(y,0).
\end{equation}
\end{theorem}
%

\begin{proof}
Set $\ms D=(B_R\setminus\bar B_{R-2r})\times[r^2,3r^2)$. For $\hat x=(x,t)\in\ms D$, let
$d_1(\hat x)=\sup\{\rho\ge 0: B_\rho(x)\subset B_R\setminus\bar B_{R-2r}\}\ge 1$. 

First, we show that there exists $\gamma=\gamma(d,\kappa)$ such that
\begin{equation}\label{e51}
\sup_{\hat x\in\ms D}\left(
d_1(\hat x)/r
\right)^\gamma u(\hat x)\le C\min_{y\in\partial B_{R-r}}u(y,0).
\end{equation}
Indeed, for any $\hat x=(x,t)\in\ms D$, we can find 
a sequence of $n\le C\log(r/d_1(\hat x))$ balls with increasing radii $r_k:=c2^k d_1(\hat x)$:
\[
B_{r_1}(x_1)\subset B_{r_2}(x_2)\subset\cdots\subset B_{r_n}(x_n)
\subset B_R\setminus \bar B_{R-2r}
\]
such that $x_1=x$, $\dist(x_n,\partial B_{R-r})\le r/2$, and $t-r_n^2\ge r^2/2$.
By Theorem~\ref{Harnack}, 
\begin{align*}
u(x,t)
&\le Cu(x_1,t-r_1^2)\le\cdots 
\\
&\le C^n u(x_n,t-r_n^2)\le C(\frac{r}{d_1(\hat x)})^c\min_{y\in \partial B_{R-r}}u(y,0),
\end{align*}	
where in the last inequality we applied Theorem~\ref{Harnack} to a chain of parabolic balls with spatial centers at $\partial B_{R-r}$ and radius $cr$. Display \eqref{e51} is proved.

Next, with $\gamma$ as in \eqref{e51}, letting $d_0(\hat x)=\sup\{\rho\ge 0: Q_\rho(\hat x)\subset(\Z^d\setminus \bar B_{R-2r})\times[r^2,3r^2)\}$ 
, we claim that 
\begin{equation}\label{e49}
\sup_{\hat x\in\ms D}d_0(\hat x)^\gamma u(\hat x)\le \epsilon^{-\gamma}\sup_{\hat y\in\ms D}d_1(\hat y)^\gamma u(\hat y),
\end{equation}
where $\epsilon=\epsilon(d,\kappa)\in(0,1/5)$ is to be determined. 
It suffices to show that $\sup_{\ms D}d_0^\gamma u$ is achieved at $\hat x\in\ms D$ with $\epsilon d_0(\hat x)\le d_1(\hat x)$. Indeed, if $\epsilon d_0(\hat x)>d_1(\hat x)$, then $B_{2d_1(\hat x)}\setminus B_R\neq\emptyset$, and for any $\hat y=(y,s)\in Q_{2d_1(\hat x)}(\hat x)\cap\ms D$,
\[
d_0(\hat x)\le d_0(\hat y)+|x-y|+|t-s|^{1/2}\le d_0(\hat y)+4d_1(\hat x)\le d_0(\hat y)+4\epsilon d_0(\hat x)
\]
and so $d_0(\hat x)\le (1-4\epsilon)^{-1} d_0(\hat y)$. Moreover, 
by Corollary~\ref{cor:prob1}, 
\begin{align*}
d_0(\hat x)^\gamma u(\hat x)&\le 
[1-P_\omega^{\hat x}(X_\cdot \text{ exits $B_{2d_1(\hat x)}(x)\cap B_R$ from $\partial B_R$ before time }d_1^2(\hat x))]\\
&\qquad\times d_0(\hat x)^\gamma\sup_{(B_{2d_1(\hat x)}(x)\cap B_R)\times[r^2,3r^2)}u\\
&\le
(1-c_0)(1-4\epsilon)^{-\gamma}\sup_{\ms D}d_0^\gamma u
\end{align*}	
for a constant $c_0\in(0,1)$. Thus, when $\epsilon d_0(\hat x)> d_1(\hat x)$, choosing $\epsilon>0$ so that $(1-c_0)(1-4\epsilon)^{-\gamma}<1-\tfrac{c_0}{2}$, we get $d_0(\hat x)^\gamma u(\hat x)
 <(1-\tfrac{c_0}{2})\sup_{\ms D} d_0^\gamma u$.
Display \eqref{e49} is proved.
Inequality \eqref{e25} follows from \eqref{e51} and \eqref{e49}.
\end{proof}

\begin{proof}[Proof of Theorem~\ref{thm-eh}:]
Since $u=0$ on $\partial B_R\times[0,R^2)$,
\begin{align*}
\sup_{Q_R^\delta}u
&\le 
\sup_{B_R\times\{R^2-\tfrac{1}{4}(\delta R)^2\}}u\\
&\le
C\sup_{B_{(1-\delta)R}\times\{R^2-\tfrac{1}{2}(\delta R)^2\}}u\\
&\stackrel{Theorem~\ref{Harnack}}{\le} 
C(d,\kappa,\delta)\inf_{Q_R^\delta }u,
\end{align*}
where we used Theorem~\ref{thm-Carl} and Theorem~\ref{Harnack} in the second inequality.
\end{proof}

\subsection{A boundary Harnack inequality}
For positive harmonic functions with zero values on the spatial boundary, the following boundary Harnack inequality compares values near the spatial boundary and values inside, with time coordinates appropriately shifted.
\begin{theorem}[Boundary PHI]\label{thm-bh}
Let $R>0$.
Suppose $u$ is a nonnegative solution to $\mc L_\omega u=0$ in $(B_{4R}\setminus B_{2R})\times(-2R^2,3R^2)$, and $u|_{\partial B_{4R}\times\R}=0$. Then for any $\hat x=(x,t)\in (B_{4R}\setminus\bar B_{3R})\times(-R^2,R^2)$, we have
\[
C\frac{\dist(x,\partial B_{4R})}{R}\max_{y\in\partial B_{3R}}u(y, t+R^2)
\le 
u(\hat x)
\le 
C\frac{\dist(x,\partial B_{4R})}{R}\min_{y\in\partial B_{3R}}u(y,t-R^2).
\]
\end{theorem}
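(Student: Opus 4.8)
Each inequality is proved by comparing $u$ on a space-time sub-cylinder of the annulus with an explicit $\mc L_\omega$-supersolution (upper bound) or $\mc L_\omega$-subsolution (lower bound), via the optional-stopping representation~\eqref{representation}. Balancedness enters through the facts that affine maps $y\mapsto\langle y,e\rangle$ are $\mc L_\omega$-harmonic and that every convex time-independent $\phi$ satisfies $\mc L_\omega\phi\ge0$ (a $\kappa$-weighted sum of nonnegative second differences), with $\mc L_\omega|y-a|^2\in[2d\kappa,2d/\kappa]$ and $\mc L_\omega e^{\theta|y|}>0$; this produces the \emph{linear} boundary decay, sharpening the H\"older decay of \eqref{e49}. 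Interior PHI (Theorem~\ref{Harnack}), chained along balls $B_{R/4}(y)$ with $y$ near $\partial B_{3R}$ (all inside $B_{4R}\setminus B_{2R}$), and the Carleson estimate (Theorem~\ref{thm-Carl}) near $\partial B_{4R}$, let us bound the relevant boundary values of $u$ by its values at interior corkscrew points, and hence by $\inf_{\partial B_{3R}}u(\cdot,t-R^2)$ (resp.\ $\sup_{\partial B_{3R}}u(\cdot,t+R^2)$) up to a constant.

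\textbf{Upper bound.} Set $m:=\inf_{\partial B_{3R}}u(\cdot,t-R^2)$. Chaining interior PHI along $\partial B_{3R}$ and applying the Carleson estimate at scale $\asymp R$ near $\partial B_{4R}$ — whose hypotheses require $u$ harmonic an extra $\asymp R^2$ into the past, exactly the slack between ``$u$ harmonic on $(-2R^2,2R^2)$'' and ``$|t|<R^2$'' — yields $u\le Cm$ on the inner wall and on the terminal face of $\ms C:=(B_{4R}\setminus B_{3R})\times I$, where $I:=\bigl(t-\varepsilon_0 d(x)R,\ t-\varepsilon_0 d(x)R+R^2\bigr)$ with $\varepsilon_0=\varepsilon_0(d,\kappa)$ small (so $I\subset(-2R^2,2R^2)$ and $t\in\operatorname{int}I$, using $1\le d(x)\le 8R$). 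On $\ms C$ take
\[
\Psi(y,s):=Cm\Big(\,a\,\tfrac{s-\sup I}{R^2}+b+\lambda\,\tfrac{(4R)^2-|y|^2}{R^2}\,\Big),\qquad \lambda=\tfrac1{2d\kappa},\ \ a=2d\kappa\lambda=1,\ \ b=\tfrac{|I|}{R^2}+\tfrac{C_1}{R},
\]
with $C_1=C_1(d,\kappa)$. Then $\mc L_\omega\Psi=\tfrac{Cm}{R^2}\big(a-\lambda\,\mc L_\omega|y|^2\big)\le0$; one has $\Psi\ge0$ on $\partial B_{4R}\times I$ and $\Psi\ge Cm$ on the rest of $\ms C^\p$ (using $b\ge1$); and, since $(4R)^2-|x|^2\le 16R\,d(x)$ and $a\tfrac{t-\sup I}{R^2}+b=\tfrac{\varepsilon_0 d(x)}{R}+\tfrac{C_1}{R}\le\tfrac{C_2 d(x)}{R}$, one gets $\Psi(x,t)\le C'm\,d(x)/R$. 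Thus $\Psi\ge u$ on $\ms C^\p$, so \eqref{representation} gives $u\le\Psi$ on $\ms C$, whence $u(x,t)\le C'm\,d(x)/R$; since $\sup_{\partial B_{3R}}u(\cdot,t+R^2)\le Cm$ by PHI, the claim follows.

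\textbf{Lower bound.} Set $S:=\sup_{\partial B_{3R}}u(\cdot,t+R^2)$; by interior PHI, $u\ge cS$ on the inner wall of the annulus at every time in $(t-R^2,t+R^2)$. On $\ms C':=(B_{4R}\setminus B_{3R})\times(t-R^2,t+R^2)$ let
\[
\Phi(y,s):=cS\,P_\omega^{y,s}\big(\hat X\ \text{exits }\ms C'\text{ through its inner wall before the outer wall and before time }t+R^2\big),
\]
which is $\mc L_\omega$-harmonic on $\ms C'$, equal to $cS$ on the inner wall and to $0$ on the outer wall and the terminal face. Then $\Phi\le u$ on $\ms C'^\p$, so \eqref{representation} gives $u\ge\Phi$, and it remains to show this hitting probability is $\ge c\,d(x)/R$. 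The walk reaches $B_{3R}$ before $\partial B_{4R}$ with probability $\ge c\,d(x)/R$, by optional stopping for the submartingale $e^{\theta(|X_s|-4R)}$, $\theta\asymp1/R$ (which is $\asymp1$ on $\partial B_{4R}$, $\le e^{-c}$ on $B_{3R}$, and $=e^{-\theta d(x)}\asymp1-\theta d(x)$ at $x$); that this moreover happens before time $t+R^2$, with comparable probability, is a time-localized gambler's-ruin estimate of the type in Section~\ref{sec:auxiliary-prob} (via the strong Markov property and a Gaussian tail for the escape time to depth $\asymp R$). Hence $u(x,t)\ge c\,S\,d(x)/R$.

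\textbf{Main obstacle.} The real difficulty in both directions is controlling $u$ on the \emph{terminal}-time face of the comparison cylinder near the outer sphere, where no decay of $u$ is known a priori — proving such decay is precisely the content of the theorem. For the upper bound this is handled by the Carleson estimate, whose range of applicability is what forces the extra time-margin in the hypotheses, and $\Psi$ is then designed to dominate $u$ on that face while still decaying linearly at $(x,t)$, which fixes the signs of its three terms. For the lower bound the analogue is the time-localized gambler's-ruin bound: since the inner-wall lower bound $u\ge cS$ is only valid up to time $\asymp t+R^2$, the walk must reach $B_{3R}$ before $\partial B_{4R}$ \emph{and} within that window with probability still $\gtrsim d(x)/R$ — the one genuinely delicate probabilistic input.
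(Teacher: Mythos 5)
Your overall architecture matches the paper's: both proofs combine the optional-stopping representation~\eqref{representation} with the Carleson estimate (Theorem~\ref{thm-Carl}), interior PHI chaining, and a quantitative estimate on the probability that the walk, started in $B_{4R}\setminus B_{3R}$, exits through the inner (resp.\ outer) sphere within time $\asymp R^2$. The paper packages the latter into Lemma~\ref{lem6}, Corollary~\ref{cor2} (lower bound) and Lemma~\ref{lem7} (upper bound), while you build equivalent barriers inline. That is a legitimate repackaging, and your upper-bound barrier is structurally sound (the particular constants $\lambda=\tfrac1{2d\kappa}$, $a=1$, $b=1+C_1/R$ do not make $\Psi\ge Cm$ on the inner wall when $d\ge4$ and $\kappa$ is close to $1$, since $7\lambda=\tfrac{7}{2d\kappa}<1$ there, but choosing $\lambda\ge\tfrac17$ and $a=2d\kappa\lambda$, $b=a+C_1/R$ repairs it with no change to the argument).

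The lower bound, however, has a genuine sign error. You take $f(y)=e^{\theta(|y|-4R)}$, correctly observe it is a submartingale (convex, time-independent), and try to extract from optional stopping a \emph{lower} bound on $P_i:=P_\omega^{x,t}(\hat X\ \text{hits }\partial'B_{3R}\ \text{before }\partial B_{4R})$. But $f$ is \emph{large} on the outer sphere and \emph{small} on the inner one, so the submartingale inequality $f(x)\le P_o\cdot f_{\rm out}+P_i\cdot f_{\rm in}$, together with $P_o+P_i=1$, rearranges to
\[
P_i\,(1-e^{-\theta R})\;\le\;1-e^{-\theta d(x)}\;\asymp\;\theta\,d(x),
\]
which is an \emph{upper} bound $P_i\lesssim d(x)/R$, not the needed $P_i\gtrsim d(x)/R$. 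Normalizing $f$ so that it matches the boundary data of $P_i$ makes the normalization constant negative, turning the normalized function into a supersolution, which again gives an upper bound. To get a lower bound you need a subsolution that is \emph{large} on the inner sphere and \emph{small} on the outer one, as in the paper's Lemma~\ref{lem6}: there $g(x)=e^{-\alpha|x|^2/R^2}$ is shown to satisfy $\mc L_\omega g\ge0$ on the annulus (the dominant term is the second-order one, since $\mc L_\omega|x|^2=\sum_e\omega_t(x,x+e)>0$ by balancedness), and the normalized $v=(g-e^{-\alpha})/(e^{-\alpha\beta^2}-e^{-\alpha})$ sits below the boundary data of the hitting probability, whence $P_i\ge v(x)\gtrsim d(x)/R$. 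The time-localization you defer to a ``gambler's-ruin estimate of the type in Section~\ref{sec:auxiliary-prob}'' is exactly the paper's Corollary~\ref{cor2}, so that part is fine once the spatial lower bound is corrected.
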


Theorem~\ref{thm-bh} is a lattice version of \cite[(3.9)]{Garo}. In what follows we offer a probabilistic proof.


\begin{proof}[Proof of Theorem~\ref{thm-bh}:]
Our proof uses the fact that $u(\hat X_t)$ is a martingale before exiting the region $\ms D:=(B_{4R}\setminus B_{2R})\times(-2R^2,3R^2)$.

For the lower bound, 
let $\tau_{3,4}:=\inf\{s>0: X_s\notin B_{4R}\setminus \bar B_{3R}\}$. 
By the optional stopping lemma, $u(\hat x)=E_\omega^{\hat x}[u(\hat X_{\tau_{3,4}\wedge 0.5R^2})]$, and so
\begin{align*}
u(\hat x)
&\ge P_\omega^{x,t}(\tau_{3,4}< R^2/2,X_{\tau_{3,4}}\in \partial B_{3 R}) \inf_{\partial B_{3 R}\times[t,t+0.5R^2]}
u\\
&\ge 
C\frac{\dist(x,\partial B_{4R})}{R}\max_{y\in\partial B_{3R}}u(y,t+R^2)
\end{align*}
where in the last inequality  we used Lemma~\ref{lem6} and applied Theorem~\ref{Harnack} (to a chain of parabolic balls). The lower bound is obtained.

To obtain the upper bound, note that for $\hat x\in (B_{4R}\setminus \bar B_{3R})\times(-R^2,R^2)$,
\begin{align*}
u(\hat x)
&\le 
\bigg[\max_{z\in B_{4R}\setminus\bar  B_{3R}}u(z,t+\tfrac{R^2}{2})+\max_{\partial B_{3R}\times(t,t+\tfrac{R^2}{2}]}u\bigg]
P_\omega^{x,t}(X_{\tau_{3,4}\wedge 0.5R^2}\notin \partial B_{4R})\\
&\stackrel{\eqref{e25}}{\le}
C\bigg[\max_{z\in B_{3.5R}\setminus\bar B_{3R}}u(z,t-\tfrac{R^2}{2})+\max_{\partial B_{3R}\times(t,t+\tfrac{R^2}{2}]}u\bigg]
P_\omega^{x,t}(X_{\tau_{3,4}\wedge 0.5R^2}\notin \partial B_{4R})\\
&\le
C\min_{z\in\partial B_{3R}}u(z,t-R^2)\dist(x,\partial B_{4R})/R,
\end{align*}
where in the last inequality we applied Lemma~\ref{lem7}  and used an  iteration of the Harnack inequality (Theorem~\ref{Harnack}).
\end{proof}

\section{Proof of PHI for the adjoint operator (Theorem~\ref{thm-ah})}\label{sec:proof-of-ah}

We define $\hat Y_t=(Y_t,S_t)$ to be the continuous-time Markov chain on $\Z^d\times\R$ with generator $\mc L_\omega^*$. The process $\hat Y_t$ can be interpreted as the time-reversal of $\hat X_t$. Denote by $P_{\omega^*}^{y,s}$ the quenched law of $\hat Y_\cdot$ starting from $\hat Y_0=(y,s)$ and by $E_{\omega^*}^{y,s}$ the corresponding expectation. Note that $S_t=S_0-t$.

 For $R>0, \hat x=(x,t), \hat y=(y,s)\in B_R\times\R$ with $s>t$, set 
\[
\begin{array}{rl}
& p_R^\omega(\hat x;\hat y)=P_\omega^{x,t}(X_{s-t}=y,s-t<\tau_R(\hat X)),\\
&p_R^{*\omega}(\hat y;\hat x)=P_{\omega^*}^{y,s}(Y_{s-t}=x, s-t<\tau_R(\hat Y)),
\end{array}
\]
where 
\begin{equation}\label{e22}
\tau_R(\hat X):=\inf\{t\ge 0: X_t\notin B_R\}
\end{equation}
and $\tau_R(\hat Y)$ is defined similarly.
Note that 
\[
p_R^{*\omega}(\hat y;\hat x)=\frac{\rho_\omega(\hat x)}{\rho_\omega(\hat y)}p_R^\omega(\hat x;\hat y).
\]
\begin{lemma}\label{lem8}
For any $\hat y=(y,s)\in B_R\times(0,\infty)$ and any non-negative solution $v$ of $\mc L^*_\omega v=0$ in $B_R\times(0,s]$, 
\begin{align*}
\MoveEqLeft v(\hat y)
=
\sum_{x\in\partial B_R,z\in B_R,x\sim z}\int_0^s 
\frac{\rho_\omega(x,t)}{\rho_\omega(\hat y)}\omega_t(z,x)p^{\omega}_R(z,t;\hat y) v(x,t)
\dd t\\
&+
\sum_{x\in B_R}\frac{\rho_\omega(x,0)}{\rho_\omega(\hat y)}p_R^\omega(x,0;\hat y)v(x,0).
\end{align*}

\end{lemma}
\begin{proof}
Write the two summations in the lemma as $\rom{1}$ and $\rom{2}$. 
Clearly, $\rom{2}=E_{\omega^*}^{\hat y}[v(\hat Y_s)1_{\tau_R>s}]$. 
Since $(v(\hat Y_t))_{t\ge 0}$ is a martingale, we have
 \begin{align*}
 v(y,s)
 =E_{\omega^*}^{\hat y}[v(\hat Y_{\tau_R})1_{\tau_R\le s}]
 +E_{\omega^*}^{\hat y}[v(\hat Y_s)1_{\tau_R>s}].
 \end{align*}
So it remains to show $\rom{1}=E_{\omega^*}^{y,s}[v(\hat Y_{\tau_R})1_{\tau_R\le s}]$.
We claim that for $x\in\partial B_R$,
 \begin{equation}\label{e30}
 P_{\omega*}^{\hat y}(Y_{\tau_R}=x,\tau_R\in\dd t)
=
 \sum_{z\in B_R,z\sim x}\frac{\rho_\omega(x,s-t)}{\rho_\omega(\hat y)}\omega_{s-t}(z,x)p_R^\omega(z,s-t;\hat y)\dd t.
 \end{equation}
 Indeed, for $h>0$ small enough, $x\in\partial B_R$ and almost every $t\in(0,s)$,
 \begin{align*}
&P_{\omega^*}^{y,s}(Y_{\tau_R}=x,\tau_R\in (t-h,t+h))\\
&=\sum_{z\in B_R:z\sim x}P_{\omega^*}^{\hat y}(Y_{t-h}=z,\tau_R>t-h)P_{\omega^*}^{z,s-t+h}(Y_{2h}=x)+o(h)\\
&=\sum_{z\in B_R:z\sim x}p_R^{\omega^*}(\hat y;z,s-t)
\int_{-h}^h\omega^*_{s-t+r}(z,x)\dd r+o(h).
\end{align*}
Dividing both sides by $2h$ and taking $h\to 0$, display \eqref{e30} follows by Lebesgue's differentiation theorem. Applying \eqref{e30} to 
\[
E_{\omega^*}^{y,s}[v(\hat Y_{\tau_R})1_{\tau_R\le s}]
= 
\sum_{x\in\partial B_R}\int_0^s v(x,s-t)P_{\omega*}^{\hat y}(Y_{\tau_R}=x,\tau_R\in\dd t),
\]
 we obtain $\rom{1}=E_{\omega^*}^{y,s}[v(\hat Y_{\tau_R})1_{\tau_R\le s}]$ with a change of variable.
\end{proof}

For fixed $\hat y:=(y,s)\in B_{R}\times\R$,
set $u(\hat x):=p^\omega_{2R}(\hat x,\hat y)$. Then $\mc L_\omega u=0$ in $B_{2R}\times(-\infty,s)\cup (B_{2R}\setminus B_R)\times\R$ and $u(x,t)=0$ when $x\in\partial B_{2R}$ or $t>s$. 
By Theorem~\ref{thm-bh} and Theorem~\ref{thm-eh},
for any $(x,t)\in B_{2R}\times(s-4R^2,s-\tfrac{R^2}{2})$,
\begin{align}\label{e34}
u(x,t)
\asymp u(o,s-R^2)\dist(x,\partial B_{2R})/R,
\end{align}
and, for any $(x,t)\in (B_{2R}\setminus B_{3R/2})\times(s-4R^2,s)$,
\begin{align}\label{e35}
u(x,t)
&\le Cu(o,s-R^2)\dist(x,\partial B_{2R})/R.
\end{align}

\begin{lemma}\label{lem9}
Let $v\ge 0$ satisfies $\mc L_\omega^* v=0$ in $B_{2R}\times(0,4R^2]$, then for any $\bar Y=(\bar y,\bar s)\in B_R\times(3R^2,4R^2]$ and $\lbar Y=(\lbar y,\lbar s)\in B_R\times(R^2,2R^2)$, we have
\[
\frac{v(\bar Y)}{v(\lbar Y)}
\ge 
C\dfrac{\int_{0}^{R^2}\rho_\omega(\partial B_{2R},t)\dd t
+\sum_{x\in B_{2R}}\rho_\omega(x,0)\dist(x,\partial B_{2R})}
{\int_{0}^{4R^2}\rho_\omega(\partial B_{2R},t)\dd t
+\sum_{x\in B_{2R}}\rho_\omega(x,0)\dist(x,\partial B_{2R})}.
\]
\end{lemma}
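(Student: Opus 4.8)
The plan is to apply the representation formula of Lemma~\ref{lem8} to $v$ both at $\bar Y$ and at $\lbar Y$, and then compare the two resulting expressions term by term using the heat-kernel lower bound \eqref{e34} (applied with $\hat y=\bar Y$) and the heat-kernel upper bound \eqref{e35} (applied with $\hat y=\lbar Y$). Concretely, write, for a generic $\hat y=(y,s)\in B_R\times\R$,
\[
v(\hat y)\rho_\omega(\hat y)
=\sum_{\substack{x\in\partial B_{2R},z\in B_{2R}\\ x\sim z}}\int_0^s \rho_\omega(x,t)\omega_t(z,x)\,p^\omega_{2R}(z,t;\hat y)\,v(x,t)\,\dd t
+\sum_{x\in B_{2R}}\rho_\omega(x,0)\,p^\omega_{2R}(x,0;\hat y)\,v(x,0),
\]
so that the quantity $v(\bar Y)/v(\lbar Y)$ becomes a ratio of two such sums (the factors $\rho_\omega(\bar Y)$, $\rho_\omega(\lbar Y)$ being fixed positive numbers that we will absorb; more precisely we must be careful, so see below). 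The key point is that in both numerator and denominator the weights $\rho_\omega(x,t)\omega_t(z,x)v(x,t)$ (resp. $\rho_\omega(x,0)v(x,0)$) against which we integrate are \emph{the same}; only the kernel $u_{\hat y}(z,t)=p^\omega_{2R}(z,t;\hat y)$ changes with $\hat y$.

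The execution would go as follows. First, for the numerator (the $\bar Y$-integral): $\bar s\in(3R^2,4R^2]$, so the relevant $(z,t)$ with $z\in B_{2R}$, $t\in(0,\bar s)$ that contribute to the boundary term satisfy $t\in(0,\bar s)\subset(\bar s-4R^2,\bar s)$, hence by \eqref{e34} restricted to $t\le \bar s-R^2/2$ — i.e.\ $t\le R^2/2$, say, after shrinking — we get $u_{\bar Y}(z,t)\ge C u_{\bar Y}(o,\bar s-R^2)\dist(z,\partial B_{2R})/R$ for $z$ near the boundary; and for $z\sim x\in\partial B_{2R}$ one has $\dist(z,\partial B_{2R})\ge 1$, so $u_{\bar Y}(z,t)\ge C u_{\bar Y}(o,\bar s-R^2)$. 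Thus the boundary term of $v(\bar Y)\rho_\omega(\bar Y)$ is bounded below by $C u_{\bar Y}(o,\bar s-R^2)$ times $\sum_{x\in\partial B_{2R}}\int_0^{R^2}\rho_\omega(x,t)\omega_t(z,x)v(x,t)\,\dd t$, and using uniform ellipticity $\omega_t(z,x)\ge\kappa$ and absorbing $\kappa$ into $C$, this is $\ge C u_{\bar Y}(o,\bar s-R^2)\sum_{x\in\partial B_{2R}}\int_0^{R^2}\rho_\omega(x,t)v(x,t)\,\dd t$. Similarly the initial-time term of $v(\bar Y)\rho_\omega(\bar Y)$ is $\ge C u_{\bar Y}(o,\bar s-R^2)\sum_{x\in B_{2R}}\rho_\omega(x,0)\dist(x,\partial B_{2R})v(x,0)$. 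For the denominator (the $\lbar Y$-integral): $\lbar s\in(R^2,2R^2)$, so for all $t\in(0,\lbar s)$ we have $t\in(\lbar s-4R^2,\lbar s)$ and \eqref{e35} gives $u_{\lbar Y}(z,t)\le C u_{\lbar Y}(o,\lbar s-R^2)\dist(z,\partial B_{2R})/R\le C u_{\lbar Y}(o,\lbar s-R^2)$; moreover, extending the $t$-integral from $(0,\lbar s)$ up to $(0,4R^2)$ only increases the sum, and $\omega_t(z,x)\le 1/\kappa$. Hence $v(\lbar Y)\rho_\omega(\lbar Y)\le C u_{\lbar Y}(o,\lbar s-R^2)\big[\sum_{x\in\partial B_{2R}}\int_0^{4R^2}\rho_\omega(x,t)v(x,t)\,\dd t+\sum_{x\in B_{2R}}\rho_\omega(x,0)\dist(x,\partial B_{2R})v(x,0)\big]$.

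Dividing, the unknown weights $v(x,t)$ still appear in both the numerator sums and the denominator sums, so they do \emph{not} cancel exactly; but since $v\ge 0$ the elementary inequality $\sum a_i v_i /\sum b_i v_i\ge \min_i(a_i/b_i)$ — or rather the cruder bound obtained by noting that each numerator sum is a sub-sum, with the \emph{same} $v$-weights, of a quantity dominated by the corresponding denominator sum — lets us conclude
\[
\frac{v(\bar Y)}{v(\lbar Y)}
\ge C\,\frac{u_{\bar Y}(o,\bar s-R^2)\,\rho_\omega(\lbar Y)}{u_{\lbar Y}(o,\lbar s-R^2)\,\rho_\omega(\bar Y)}\cdot
\frac{\sum_{x\in\partial B_{2R}}\int_0^{R^2}\rho_\omega(x,t)\,\dd t+\sum_{x\in B_{2R}}\rho_\omega(x,0)\dist(x,\partial B_{2R})}{\sum_{x\in\partial B_{2R}}\int_0^{4R^2}\rho_\omega(x,t)\,\dd t+\sum_{x\in B_{2R}}\rho_\omega(x,0)\dist(x,\partial B_{2R})}.
\]
It remains to show the prefactor $u_{\bar Y}(o,\bar s-R^2)\rho_\omega(\lbar Y)/\big(u_{\lbar Y}(o,\lbar s-R^2)\rho_\omega(\bar Y)\big)$ is bounded below by a constant $C(d,\kappa)$. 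This is where the main obstacle lies, and it is handled by recognizing $u_{\bar Y}(o,\bar s-R^2)=p^\omega_{2R}(o,\bar s-R^2;\bar Y)$ and relating $p^\omega_{2R}(o,\bar s-R^2;\bar Y)/\rho_\omega(\bar Y)$ to the reversed kernel $p^{*\omega}_{2R}(\bar Y;o,\bar s-R^2)/\rho_\omega(o,\bar s-R^2)$, together with the parabolic Harnack inequality (Theorem~\ref{Harnack}) and the volume-doubling property (Theorem~\ref{thm:vd}) — which is precisely the role the introduction advertises for volume-doubling in controlling ``the change of probabilities due to time-reversal.'' In fact, since $o$ and the time-separations $\bar s-R^2$ and $R^2$ are comparable, a PHI chain together with the uniform lower bound $\inf_{B_{R}\times[\theta R^2,R^2]}p^\omega_{2R}(\cdot;\bar Y)\ge c\,p^\omega_{2R}(o,\bar s-R^2;\bar Y)$ and a matching upper bound for the $\lbar Y$-kernel reduces the prefactor to a ratio of $\rho_\omega$-masses over comparable space-time boxes, controlled by Theorem~\ref{thm:vd}. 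I expect the bulk of the remaining work — and the only genuinely delicate point — to be this Harnack/volume-doubling bookkeeping on the prefactor; the term-by-term comparison of the two representation formulas is routine given \eqref{e34}, \eqref{e35}, uniform ellipticity, and $v\ge 0$.
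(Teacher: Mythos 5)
Your setup (Lemma~\ref{lem8} applied at both $\bar Y$ and $\lbar Y$, with \eqref{e34} as a lower bound on $u_{\bar Y}$ and \eqref{e35} as an upper bound on $u_{\lbar Y}$) is the right skeleton, but the step where you pass from the ratio of $v$-weighted sums to the ratio of pure $\rho$-sums is a genuine gap. After dividing you have
\[
\frac{v(\bar Y)}{v(\lbar Y)}\ \ge\ C\,\frac{u_{\bar Y}(o,\bar s-R^2)\,\rho_\omega(\lbar Y)}{u_{\lbar Y}(o,\lbar s-R^2)\,\rho_\omega(\bar Y)}\cdot\frac{N_v}{D_v},
\]
where $N_v$ and $D_v$ denote your two $v$-weighted sums (boundary integral over $(0,R^2]$, respectively $(0,4R^2]$, plus the common initial-time term), and you still need $N_v/D_v\ge C\,N_1/D_1$. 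This is false for general $v\ge 0$: in your ``$\min_i(a_i/b_i)$'' inequality the coefficient ratio is $0$ at boundary points with $t\in(R^2,4R^2]$, and if $v$ is small on $\partial B_{2R}\times[0,R^2]$ and on $B_{2R}\times\{0\}$ but large on $\partial B_{2R}\times(R^2,\lbar s]$, then $N_v/D_v$ can be arbitrarily small compared with $N_1/D_1$; your ``sub-sum'' observation only gives $N_v/D_v\le 1$, which is the wrong direction. The correct move is to truncate \emph{both} boundary integrals at the \emph{same} time $\lbar s$ (shrinking the $\bar Y$-integral from $(0,\bar s)$ to $(0,\lbar s)$ only decreases it, and $(0,\lbar s)$ still lies in the range of validity of \eqref{e34} since $\lbar s<2R^2<\bar s-R^2/2$), so that the two $v$-weighted brackets are literally identical and cancel, leaving $v(\bar Y)/v(\lbar Y)\ge C\,\bigl[\bar u(o,\bar s-R^2)/\rho_\omega(\bar Y)\bigr]\big/\bigl[\lbar u(o,\lbar s-R^2)/\rho_\omega(\lbar Y)\bigr]$ with no $v$-dependence at all.

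Second, your plan for the prefactor --- showing it is bounded below by a constant via PHI chains and Theorem~\ref{thm:vd} --- misses the point of the lemma. A constant lower bound on the prefactor, combined with the exact cancellation above, would give $v(\bar Y)\ge Cv(\lbar Y)$ outright, i.e.\ the adjoint Harnack inequality itself; that is precisely what is not yet available here, and the volume-doubling comparison is deliberately deferred to the proof of Theorem~\ref{thm-ah} (via \eqref{e41}). The device you are missing is to apply Lemma~\ref{lem8} with $v\equiv 1$ (a nonnegative solution of $\mc L_\omega^*v=0$): combined with \eqref{e35} this turns the identity $1=\cdots$ into an upper bound for $\bar u(o,\bar s-R^2)/(R\rho_\omega(\bar Y))$ by the reciprocal of $\sum_{x\in\partial B_{2R}}\int_0^{\bar s}\rho_\omega(x,t)\,\dd t+\sum_{x}\rho_\omega(x,0)\dist(x,\partial B_{2R})$, and combined with \eqref{e34} into a matching lower bound for $\lbar u(o,\lbar s-R^2)/(R\rho_\omega(\lbar Y))$; the quotient of these two estimates is exactly the ratio of $\rho$-integrals in the statement. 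Without the $v\equiv1$ trick there is no route from the prefactor to the claimed conclusion.
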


\begin{proof}
Write $\hat x:=(x,t)$ and set $\bar u(\hat x):=p^\omega_{2R}(\hat x;\bar Y)$,
$\lbar u(\hat x):=p^\omega_{2R}(\hat x;\lbar Y)$.
By Lemma~\ref{lem8} and \eqref{e34}, 
\begin{align}\label{e32}
v(\bar Y)
&\ge 
C\sum_{x\in\partial B_{2R},z\in B_{2R},x\sim z}\int_0^{\lbar s} \frac{\rho_\omega(\hat x)}{\rho_\omega(\bar Y)}\bar u(z,t)v(\hat x)\dd t
\nonumber\\
&\qquad+
C\sum_{x\in B_{2R}}\frac{\rho_\omega(x,0)}{\rho_\omega(\bar Y)}\bar u(0,\bar s-R^2)\frac{\dist(x,\partial B_{2R})}{R}v(x,0)\nn\\
&\ge
C\frac{\bar u(0,\bar s-R^2)}{R\rho_\omega(\bar Y)}\bigg[\sum_{x\in\partial B_{2R}}\int_0^{\lbar s}\rho_\omega(\hat x)v(\hat x)\dd t\nn\\
&\qquad+\sum_{x\in B_{2R}}\rho_\omega(x,0)\dist(x,\partial B_{2R})v(x,0)\bigg].
\end{align}
Similarly, by Lemma~\ref{lem8} and \eqref{e35}, we have
\begin{align}\label{e33}
v(\lbar Y)
&\le 
C\frac{\lbar u(0,\lbar s-R^2)}{R\rho_\omega(\lbar Y)}\bigg[\sum_{x\in\partial B_{2R}}\int_0^{\lbar s}\rho_\omega(\hat x)v(\hat x)\dd t\nn\\
&\qquad+\sum_{x\in B_{2R}}\rho_\omega(x,0)\dist(x,\partial B_{2R})v(x,0)\bigg].
\end{align}
Combining \eqref{e32} and \eqref{e33}, we get
\begin{equation}\label{eq:v-low-up}
\frac{v(\bar Y)}{v(\lbar  Y)}
\ge 
C\frac{\bar u(o,\bar s-R^2)/\rho_\omega(\bar Y)}{\lbar u(o, \lbar s-R^2)/\rho_\omega(\lbar Y)}.
\end{equation}
Next, taking $v\equiv 1$, by Lemma~\ref{lem8} and \eqref{e35}, 
\begin{align*}
1
&=
\sum_{x\in\partial B_{2R},z\in B_{2R},z\sim x}\int_{0}^{\bar s}
\frac{\rho_\omega(\hat x)}{\rho_\omega(\bar Y)}
\omega_t(z,x)\bar u(z,t)\dd t
+
\sum_{x\in B_{2R}}\frac{\rho_\omega(x,0)}{\rho_\omega(\bar Y)}\bar u(x,0)\\
&\le 
C\frac{\bar u(o,\bar s-R^2)}{R\rho_\omega(\bar Y)}
\big[
\sum_{x\in\partial B_{2R}}\int_{0}^{\bar s}\rho_\omega(\hat x)\dd t
+\sum_{x\in B_{2R}}\rho_\omega(x,0)\dist(x,\partial B_{2R})
\big].
\end{align*}
Similarly, by Lemma~\ref{lem8} and \eqref{e34},
\begin{align*}
1
&\ge 
C\frac{\lbar u(o,\lbar s-R^2)}{R\rho_\omega(\lbar Y)}
\big[
\sum_{x\in\partial B_{2R}}\int_{0}^{\lbar s/2}\rho_\omega(\hat x)\dd t
+\sum_{x\in B_{2R}}\rho_\omega(x,0)\dist(x,\partial B_R)
\big].
\end{align*}
These inequalities, together with \eqref{eq:v-low-up}, yield the lemma.
\end{proof}

\begin{remark}
It is clear that for static environments, the adjoint Harnack inequality (Theorem~\ref{thm-ah}) follows immediately from Lemma~\ref{lem9}. However, in time-dependent case, we need the parabolic volume-doubling property of $\rho_\omega$.
\end{remark}

\begin{proof}[Proof of Theorem~\ref{thm-ah}]
First, we will show that for all $R>0$,
\begin{align}\label{e41}
&\int_0^s\rho_\omega(\partial B_R,t)\dd t+\sum_{x\in B_R}\rho_\omega(x,0)\dist(x,\partial B_R)\\
&\asymp
\frac{1}{R}\int_0^s\rho_\omega(B_R,t)\dd t+\sum_{x\in B_R}\rho_\omega(x,s)\dist(x,\partial B_R).\nn
\end{align}
Recall $\tau_R$ at \eqref{e22} and set 
$g(x,t)=E_\omega^{x,t}[\tau_R(\hat X)]$. Note that $g(x,\cdot)=0$ for $x\notin B_R$ and $\mc L_\omega g(x,t)=-1$ if $x\in B_R$.
By \eqref{rho-invariance}, for any $s>0$,
\begin{align*}
&0=\sum_{x\in\Z^d}\int_0^s g(x,t)[\sum_y\rho_\omega(y,t)\omega_t(y,x)-\partial_t\rho_\omega(x,t)]\dd t\\
&=\sum_{x\in \partial B_R,y\in B_R}\int_0^s\rho(x,t)\omega_t(x,y)g(y,t)\dd t+\sum_{x\in B_R}g(x,0)\rho(x,0)\\
&\qquad-
\sum_{x\in B_R}\int_0^s\rho(x,t)\dd t-\sum_{x\in B_R}g(x,s)\rho(x,s).
\end{align*} 
Moreover, since  $|X_t|^2-\frac{d}{\kappa}t$ and $|X_t|^2-\kappa t$ are super- and sub- martingales, 
\[
g(x,t)\asymp 
E_\omega^{x,t}[|X_{\tau_R}|^2-|x|^2]
\asymp 
R\dist(x,\partial B_R) \quad \forall (x,t)\in B_R\times\R
\]
by the optional-stopping theorem. Display \eqref{e41} then follows.

Combining \eqref{e41} and Lemma~\ref{lem9}, we obtain
\[
\frac{v(\bar Y)}{v(\lbar Y)}
\ge 
C\frac{\int_0^{R^2}\rho(B_{2R},t)\dd t+R\sum_{x\in B_{2R}}\rho(x,R^2)\dist(x,\partial B_{2R})}
{\int_0^{4R^2}\rho(B_{2R},t)\dd t+R\sum_{x\in B_{2R}}\rho(x,4R^2)\dist(x,\partial B_{2R})}.
\]
Finally, Theorem~\ref{thm-ah} follows by Theorem~\ref{thm:vd} and the above inequality.
\end{proof}

\section{Proof of Theorem~\ref{thm:hke}, Corollary~\ref{cor:q-estimates} and Theorem~\ref{thm:llt}}
\label{sec:proof-llt-hke}

\subsection{Proof of Theorem~\ref{thm:hke}}
\begin{proof}
First, using Theorem~\ref{thm-ah} and standard arguments, 
we will prove \eqref{eq:quenched-hke}.
Recall that $v(\hat x):=q^\omega(\hat 0,\hat x)$ satisfies $\mc L_\omega^* v=0$ in $\Z^d\times(0,\infty)$. By Theorem~\ref{thm-ah}, for $\hat x=(x,t)\in \Z^d\times(0,\infty)$, we have
$v(\hat x)\le C\min_{y\in B_{\sqrt t}(x)}v(y,3t)$
 and so
\begin{align*}
v(\hat x)
&\le 
\frac{C}{\rho(B_{\sqrt t}(x), 3t)}\sum_{y\in B_{\sqrt t}(x)}\rho(y,3t)v(y,3t)\\
&=
\frac{C}{\rho(B_{\sqrt t}(x), 3t)}P_\omega^{0,0}(X_{3t}\in B_{\sqrt t}(x))
\le 
C\exp[-c\mathfrak{h}(|x|,t)],
\end{align*}
where Corollary~\ref{cor:prob-upperb} is used in the last inequality.
Moreover, for any $s\in[0,t]$,  $|y|\le|x|+c\sqrt t$, by Theorem~\ref{thm:vd} and iteration,
\[
\rho(B_{\sqrt t}(x),3t)\ge C \rho(B_{\sqrt {t\vee1}}(x),s)\ge
C\left(\tfrac{|x|}{\sqrt{t\vee1}}+1\right)^{-c}
\rho(B_{\sqrt{t\vee1}}(y),s).
\]
Since $\tfrac{|x|}{\sqrt{t\vee1}}+1\le C_\epsilon e^{\epsilon\mathfrak{h}(|x|,t)}$ for any $\epsilon>0$, the upper bound in \eqref{eq:quenched-hke} follows.

To obtain the lower bound in \eqref{eq:quenched-hke}, by similar argument as above and Theorem~\ref{thm-ah}, 
$v(\hat x)\ge C\max_{y\in B_{\sqrt t/2}(x)}v(y,t/4)$ for $\hat x\in\Z^d\times(0,\infty)$, and so
\begin{equation}\label{e52}
v(\hat x)
\ge 
\frac{C}{\rho(B_{\sqrt t/2}(x), t/4)} P_\omega^{0,0}(X_{t/4}\in B_{\sqrt t/2}(x)).
\end{equation}
We claim that for any $(y,s)\in \Z^d\times(0,\infty)$,
\begin{equation}\label{hitting-upperb}
P_\omega^{y,0}(X_s\in B_{\sqrt s})\ge Ce^{-c|y|^2/s}.
\end{equation}
Indeed, the case $|y|/\sqrt s\le 3$ follows from Lemma~\ref{lem1}.  When $|y|/\sqrt s>3$, let
\[
n=\floor{2|y|^2/s}. 
\]
Set $u(x,t):=p^\omega(x,t;B_{\sqrt s},s)$. Then $u$ is a $\mc L_\omega$-harmonic function on $\Z^d\times(-\infty,s)$. Taking a sequence of points $(y_i)_{i=1}^n$ such that $y_0=y, y_n=0$ and $|y_i-y_{i+1}|\le |y|/n$, for $i=0,\ldots n-1$,
\begin{align*}
&\min_{x\in B_{|y|/\sqrt{n}}(y_i)}u(x,\tfrac{i|y|^2}{n^2})\\
&\ge 
\min_{z\in B_{|y|/\sqrt{n}}(y_i)}p^\omega(z,\tfrac{i|y|^2}{n^2};B_{|y|/\sqrt{n}}(y_{i+1}),\tfrac{(i+1)|y|^2}{n^2})\min_{x\in B_{|y|/\sqrt{n}}(y_{i+1})}u(x,\tfrac{(i+1)|y|^2}{n^2})\\
&\stackrel{Lemma~\ref{lem1}}{\ge}
C\min_{x\in B_{|y|/\sqrt{n}}(y_{i+1})}u(x,\tfrac{(i+1)|y|^2}{n^2}). 
\end{align*}	
Iteration then yields $u(y,0)\ge C^{n-1}\min_{x\in B_{|y|/\sqrt{n}}}u(x,\tfrac{|y|^2}{n})\stackrel{Lemma~\ref{lem1}}{\ge} C^n$.
Inequality \eqref{hitting-upperb} is proved. Then, by \eqref{e52},
\[
v(x,t)\ge 
\frac{C}{\rho(B_{\sqrt t/2}(x), t/4)}e^{-c|x|^2/t}.
\]
Moreover, by Theorem~\ref{thm:vd}, we have for any $s\in[0,t], |y|\le |x|$,
\[
\rho(B_{\sqrt t/2}(x), t/4)
\le C\rho(B_{\sqrt t/2}(x),s)
\le C(\tfrac{|x|}{\sqrt t}+1)^c\rho(B_{\sqrt t}(y),s).
\]
The  lower bound in \eqref{eq:quenched-hke} is proved.

Next,  we will prove the moment bounds \eqref{eq:mm-hke1} and \eqref{eq:mm-hke2}, which, by \eqref{eq:quenched-hke} and \eqref{eq:rho-asymp}, are equivalent to showing that, for $r:=\sqrt t>0$,
\[
\Norm{\frac{\rho(\hat 0)}{\rho(Q_{r})}}_{L^{(d+1)/d}(\mb P)}
\le C r^2(r\vee1)^{-d}
\quad\text{ and }\quad
\Norm{\frac{\rho(\hat 0)}{\rho(Q_{r})}}_{L^{-p}(\mb P)}
\ge Cr^2(r\vee1)^{-d},
\]
where $Q_r$ is as defined in \eqref{def:parab-ball}.
Indeed, using the translation-invariance of $\mb P$ and the volume-doubling property of $\rho$, for $q:=(d+1)/d$,
\begin{align*}
\norm{\rho(\hat 0)/\rho(Q_r)}_{L^{q}(\mb P)}^q
\le 
C\frac{1}{|Q_r|}\int_{\hat x\in Q_r}E_{\mb P}\left[
\frac{\rho(\hat x)^q}{\rho(Q_r)^q}
\right]
\le 
C/|Q_r|^q,
\end{align*}	
where we used the Reverse H\"older inequality (Corollary~\ref{cor:reverse-holder}(i)) in the last inequality. 
Recalling \eqref{eq:def-integral}, inequality \eqref{eq:mm-hke1} then  follows from the fact that $|Q_r|=r^2\sum_{x\in B_r}1\asymp r^2(r\vee 1)^{-d}$.

To obtain \eqref{eq:mm-hke2}, note that by translation invariance and $\mb P$ and the volume-doubling property of $\rho$, taking $\epsilon\in(0,1/(p-1))$,
\begin{align*}
\norm{\rho(Q_r)/\rho(\hat 0)}_{L^\epsilon(\mb P)}^\epsilon
\le 
\frac{C}{|Q_r|}E_{\mb P}\left[\int_{\hat x\in Q_r}\frac{\rho(Q_{r})^\epsilon}{\rho(\hat x)^\epsilon}\right]
\le
C|Q_r|^\epsilon,
\end{align*}	
where we used the $A_p$ inequality (Theorem~\ref{thm:ap-property}\eqref{item:neg-moment}) of $\rho$ in the last inequality.
Therefore $\norm{\rho(\hat 0)/\rho(Q_r)}_{L^{-\epsilon}(\mb P)}\ge Cr^2(r\vee 1)^{-d}$ and \eqref{eq:mm-hke2} is proved.

Display \eqref{eq:mm-green} follows from \eqref{eq:mm-hke1}, \eqref{eq:mm-hke2}, and Minkowski's integral inequality.
\end{proof}

\subsection{Proof of Theorem~\ref{thm:llt}}\label{subsec:pf-thm-llt}
As a standard consequence of the PHI for $\mc L^*_\omega$, we first state the following H\"older estimate. (See a proof in \cite[Section~A.2]{arXiv}.)
\begin{corollary}\label{cor:hoelder} 
There exists $\gamma=\gamma(d,\kappa)\in(0,1]$  such that for
$\mb P$-almost all $\omega$, 
any  non-negative solution $u$ of $\mc L_\omega^*$ in $B_R(x_0)\times(t_0-R^2,t_0]$, $R>0$,  satisfies 
\[
|u(\hat x)-u(\hat y)|\le C
\left(
\frac{r}{R}
\right)^\gamma \sup_{ B_R(x_0)\times(t_0-R^2,t_0]}u
\]
for all $\hat x,\hat y\in B_r(x_0)\times(t_0-r^2,t_0]$ and $r\in(0,R)$.
\end{corollary}

Recall $q^\omega(\hat y,\hat x)$ in \eqref{eq:def-hk}. For any $\hat x=(x,t)\in\R^d\times\R$, set 
\[
v(\hat x):=q^\omega(\hat 0;\floor{x},t),
\]
where $\floor{x}$ is as in Theorem~\ref{thm:llt}. Note that $\mc L_\omega^* v=0$ in $\Z^d\times(0,\infty)$. By Corollary~\ref{cor:hoelder} and Theorem~\ref{thm:hke}, for any $\hat y=(y,s)\in B_{\sqrt t}(x)\times(\tfrac t2, t)$,
\begin{align}
\label{eq:holder-v}
|v(\hat x)-v(\hat y)|
&\le C\left(\frac{|x-y|+\sqrt{t-s}}{\sqrt t}\right)^\gamma \sup_{B_{\sqrt t}(x)\times(\tfrac t2, t]}v\nn\\
&\le
C\left(\frac{|x-y|+\sqrt{t-s}}{\sqrt t}\right)^\gamma t^{-d/2}
\end{align}
when $t>t_0(\omega)$ is big enough. Here in the last inequality we used  Corollary~\ref{cor:q-estimates}(i) which is an immediate consequence of Theorem~\ref{thm:hke} and \eqref{eq:rho-ergodic}. 

Recall $\mc O_r$ in \eqref{eq:def-continuousball}.  For $\hat x=(x,t)\in\R^d\times\R$, write 
\[
\hat x^{n}:=(\floor{nx},n^2t).
\]
To prove Theorem~\ref{thm:llt}, it suffices to show that for any $K>T$,
\begin{align}
\label{eq:uniform-conv}
\lim_{n\to\infty}\sup_{\hat x\in\mc O_K\times[T,K]}|n^dv(\hat x^n)-p_t^\Sigma(0,x)|=0.
\end{align}
Indeed, for any $\epsilon>0$, there exists $K=K(T,\epsilon,d,\kappa)>0$ such that, writing $\ms D:=(\R^d\times[T,\infty))\setminus(\mc O_K\times[T,K])$,  we have
\begin{align*}
\varlimsup_{n\to\infty}\sup_{\ms D}
n^dv(\hat x^n)+p_t^\Sigma(0,x)
&\stackrel{\eqref{eq:quenched-hke},\eqref{eq:rho-ergodic}}{\le} 
C\sup_{\ms D}
t^{-d/2}e^{-c|x|^2/t}
\le \epsilon.
\end{align*}
 Hence 
Theorem~\ref{thm:llt} follows provided that \eqref{eq:uniform-conv} is proved.

\begin{proof}[Proof of Theorem~\ref{thm:llt}]
As we discussed in the above, it suffices to prove \eqref{eq:uniform-conv}. It suffices to consider the case $T<K<2T$.
For any  $\epsilon>0$, 
\begin{align}\label{eq:split}
&\Abs{n^d v(\hat x^n)-p_t^\Sigma(0,x)}
\le A(\hat x,\epsilon)+B_n(\hat x,\epsilon)+C_n(\hat x,\epsilon),
\end{align}	
where $A(\hat x,\epsilon)=\Abs{\frac{\int_t^{t+\epsilon^2}p_s^\Sigma(0,\mc O_\epsilon(x))\dd s}{\epsilon^2|\mc O_\epsilon|}-p_t^\Sigma(0,\mc O_\epsilon(x))}$, 
\[B_n(\hat x,\epsilon)=\Abs{\int_t^{t+\epsilon^2}\frac{P_\omega^{\hat 0}(X_{n^2s}\in n\mc O_\epsilon(x))-p_s^\Sigma(0,\mc O_\epsilon(x))}{\epsilon^2|\mc O_\epsilon|}\dd s},\]
\[C_n(\hat x,\epsilon)=\Abs{n^d v(\hat x^n)-\frac{\int_t^{t+\epsilon^2}P_\omega^{\hat 0}(X_{n^2s}\in n\mc O_\epsilon(x))\dd s}{\epsilon^2|\mc O_\epsilon|}}.
\]


First, we will show that 
\begin{align}
\label{eq:uniform-ergo}
\varlimsup_{n\to\infty}\sup_{\hat x\in\mc O_K\times[T,K]}C_n(\hat x,\epsilon)=O(\epsilon^\gamma).
\end{align}
To this end, note that there exists $N=N(T,\omega,d,\kappa)$ such that for $n\ge N$,
\begin{align*}
C_n(\hat x,\epsilon)&\le 
n^dv(\hat x^n)\Abs{1-\frac{\int_t^{t+\epsilon^2}\rho(n\mc O_\epsilon(x),n^2s)\dd s}{\epsilon^2|n\mc O_\epsilon|}}
\\&
+\sum_{y\in n\mc O_\epsilon(x)}\int_t^{t+\epsilon^2}|v(y,n^2s)-v(\hat x^n)|\rho(y,n^2s)\dd s/(\epsilon^2|\mc O_\epsilon|)\\
&\le 
CT^{-d/2}\Abs{1-\frac{\int_t^{t+\epsilon^2}\rho(n\mc O_\epsilon(x),n^2s)\dd s}{\epsilon^2|n\mc O_\epsilon|}}\\&\qquad
+CT^{-(\gamma+d)/2}\epsilon^{\gamma}\int_t^{t+\epsilon^2}\rho(n\mc O_\epsilon(x),n^2s)\dd s/(\epsilon^2|n\mc O_\epsilon|),
\end{align*}	
where in the second inequality we used Corollary~\ref{cor:q-estimates}(i) and \eqref{eq:holder-v}. 
Further, by an ergodic theorem of Krengel and Pyke \cite[Theorem 1]{KP87} and  \eqref{rho-moment},
\begin{equation}\label{eq:uniform-rho}
\lim_{n\to 0}\sup_{\hat x\in \mc O_K\times[T,K]}
\Abs{1-\frac{\int_t^{t+\epsilon^2}\rho(n\mc O_\epsilon(x),n^2s)\dd s}{\epsilon^2|n\mc O_\epsilon|}}=0.
\end{equation}
Display \eqref{eq:uniform-ergo} follows.

Next, for $\hat x=(x,t)$, by writing $B_n(\hat x,\epsilon)$ as
\[
\Abs{\frac{\int_t^{t+\epsilon^2}P_\omega^{\hat 0}(X_{n^2s}\in n\mc O_\epsilon(x))\dd s}{\epsilon^2|\mc O_\epsilon|}-\frac{\int_t^{t+\epsilon^2}p_s^\Sigma(0,\mc O_\epsilon(x))\dd s}{\epsilon^2|\mc O_\epsilon|}}=:|B_n^1(\hat x,\epsilon)-B^2(\hat x,\epsilon)|,
\]
we will show that
\begin{equation}\label{eq:equicontin}
\varlimsup_{n\to\infty}\sup_{\hat x\in \mc O_K\times[T,K]}B_n(\hat x,\epsilon)=O(\epsilon^\gamma).
\end{equation}
We claim that $B_n(\hat x,\epsilon)$ is approximately equicontinuous (with order $\epsilon^\gamma$). That is, there exist $N,\delta$ depending on $(\epsilon,\omega, d,\kappa,T,K)$ such that, whenever $n\ge N$ and $\hat x_1=(x_1,t_1), \hat x_2=(x_2,t_2)\in\mc O_K\times[T,K]$ satisfy $|\hat x_1-\hat x_2|_1:=|x_1-x_2|+|t_1-t_2|<\delta$, we have
\[
|B_n(\hat x_1,\epsilon)-B_n(\hat x_2,\epsilon)|<C\epsilon^\gamma.
\]
It suffices to show that $B_n^1(\hat x,\epsilon)$ is approximately equicontinuous. Indeed, 
by \eqref{eq:uniform-ergo} and \eqref{eq:holder-v}, 
when $n\ge N$ is large and $\hat x_1,\hat x_2\in \mc O_K\times[T,K]$,
\begin{align*}
|B^1_n(\hat x_1,\epsilon)-B^1_n(\hat x_2,\epsilon)|
&\le
C_n(\hat x_1,\epsilon)+C_n(\hat x_2,\epsilon)+ n^d|v(\hat x_1^n)-v(\hat x_2^n)|\\
&\le C\epsilon^\gamma+C(|x_1-x_2|+\sqrt{|t_1-t_2|})^\gamma.
\end{align*}	
The approximate equicontinuity of $B_n^1(\hat x,\epsilon)$ follows. To prove \eqref{eq:equicontin}, we choose a finite sequence $\{\hat x_i\}_{i=1}^M$ such that $\min_{1\le i\le M}|\hat x-\hat x_i|_1<\delta$ for all $\hat x\in \mc O_K\times[T,K]$. Since $\lim_{n\to\infty}\max_{1\le i\le M}B_n(\hat x_i)=0$ by the QCLT (Theorem~\ref{thm:recall}), display \eqref{eq:equicontin} follows by the approximate equicontinuity.

Clearly, $\lim_{\epsilon\to 0}\sup_{\hat x\in\mc O_K\times[T,K]}A(\hat x,\epsilon)=0$. This, together with \eqref{eq:uniform-ergo} and \eqref{eq:equicontin}, yields the uniform convergence of \eqref{eq:split} by sending first $n\to\infty$ and then $\epsilon\to 0$. Our proof of Theorem~\ref{thm:llt} is complete.
\end{proof}

\begin{proof}[Proof of Corollary~\ref{cor:q-estimates}:]

\eqref{cor:q-hke}
follows from Theorem~\ref{thm:hke} and \eqref{eq:rho-ergodic}.
\eqref{cor:green1} and \eqref{cor:green2} are consequences of Theorems~\ref{thm:llt} and \ref{thm:hke}. Their proofs, which are similar to 
\cite[Theorem 1.14]{ACDS} and 
 \cite[Theorem 1.4]{ADS18}, can be found in \cite[A.6]{arXiv}.
\end{proof}

\section{Auxiliary probability estimates}\label{sec:auxiliary-prob}
This section contains probability estimates that are useful in the rest of the paper. It does not rely on results in the previous sections, and can be read independently. Recall the definition of the function $\mathfrak{h}(r,t)$ in \eqref{eq:def-function-h}. 
\begin{theorem}\label{thm:fluctuation}
Assume $\omega\in\Omega_\kappa$. Then for $t>0$, $r>0$,
\[
P_\omega^{0,0}(\sup_{0\le s\le t}|X_s|\ge r)\le C\exp\left(-c\mathfrak{h}(r,t)\right).
\]
\end{theorem}
\begin{proof}
Let $x(i), i=1,\ldots,d,$ denotes the $i$-th coordinate of $x\in\R^d$.
It suffices to show that for $i=1,\ldots,d$,
\[
P_\omega^{0,0}(\sup_{0\le s\le t}|X_s(i)|>r)
\le C\exp\left(-c\mathfrak{h}(r,t)\right).
\]
We will prove the statement for $i=1$.
Let $\tilde N_t:=\#\{0\le s\le t: X_s(1)\neq X_{s^-}(1)\}$ be the number of jumps in the $e_1$ direction before time $t$. Let $(S_n)$ be the discrete time simple random walk on $\Z$, then  $X_t(1)\stackrel{d}{=}S_{\tilde N_t}$. 
Note that  $\tilde N_t$ is stochastically dominated by a Poisson process $N_t$ with rate $c_0:=2d/\kappa$, and so $P_\omega^{\hat 0}(\sup_{0\le s\le t}|X_s(1)|>r)\le P(\max_{0\le m\le N_t}|S_m|>r)$. 
Hence, 
\begin{align*}
P_\omega^{\hat 0}(\sup_{0\le s\le t}|X_s(1)|>r)
&\le P(N(t)\ge 2c_0(t\vee r))+P(\max_{0\le m\le 2c_0(t\vee r)}|S_m|>r)\\
&\le e^{-c(t\vee r)}+Ce^{-cr^2/(t\vee r)}\le Ce^{-cr^2/(t\vee r)}.
\end{align*}	
where we used Hoeffding's inequality in the second inequality. On the other hand, since the random walk is in a discrete set $\Z$, we have, for any $\theta>0$,
\begin{align*}
P_\omega^{\hat 0}(\sup_{0\le s\le t}|X_s(1)|>r)
&\le 
P(N(t)>r)\\
&\le E[\exp(\theta N(t)-\theta r)]
=\exp[
c_0t(e^\theta-1)-\theta r].
\end{align*}	
When $r\ge 9c_0^2 t$, taking $\theta=\log(\tfrac{r}{c_0t})$, we get an upper bound
$\exp[-\tfrac r2\log(\tfrac{r}{t})]$. Hence, letting $f(r,t)=\tfrac{r^2}{t\vee r}\mathbbm{1}_{r<9c_0^2t}+r\log(\tfrac{r}{t})\mathbbm{1}_{r\ge 9c_0^2t}$, we obtain
\[
P_\omega^{\hat 0}(\sup_{0\le s\le t}|X_s(1)|>r)
\le 
C\exp(-cf(r,t)).
\]
Since
$f(r,t)\asymp \tfrac{r^2}{t\vee r}+r\log(\tfrac{r}{t})\mathbbm{1}_{r\ge 9c_0^2t}\asymp \mathfrak{h}(r,t)$, our proof is complete.
\end{proof}

\begin{corollary}\label{cor:prob-upperb}
Assume $\omega\in\Omega_\kappa$ and $\theta_2>\theta_1>0$.
There exist $C,c$ depending on $(d,\kappa,\theta_1,\theta_2)$ such that for  $\theta\in(\theta_1,\theta_2), (x,t)\in\Z^d\times(0,\infty)$,
\begin{equation*}
P_\omega^{0,0}(X_{t}\in B_{\sqrt{\theta t}}(x))
\le 
C\exp[-c\mathfrak{h}(|x|,t)].
\end{equation*}
\end{corollary}
\begin{proof}
Since $\mathfrak{h}(0,t)=0$, we only need to consider the case $x\neq 0$. 

If $\theta t\le 1$, then $P_\omega^{\hat 0}(X_{t}\in B_{\sqrt{\theta t}}(x))=P_\omega^{\hat 0}(X_t=x)\le P_\omega^{\hat 0}(\sup_{0\le s\le t}|X_s|\ge |x|)\le C\exp[-c\mathfrak{h}(|x|,t)]$ by Theorem~\ref{thm:fluctuation}. 

If $\theta t>1$ and $1\le |x|\le 2\sqrt{\theta t}$, then $|x|\le |x|^2\le 4\theta t$ and so $\mathfrak{h}(|x|,t)\asymp \mathfrak{h}(|x|,4\theta t)\asymp\tfrac{|x|^2}{t}$. In particular, $\mathfrak{h}(|x|,t)\le C\tfrac{|x|^2}{t}\le c$. Hence, trivially, $P_\omega^{\hat 0}(X_{t}\in B_{\sqrt{\theta t}}(x))\le 1\le C\exp(-c\mathfrak{h}(|x|,t))$.

It reminds to consider $|x|>2\sqrt{\theta t}>2$. In this case, by Theorem~\ref{thm:fluctuation},
$P_\omega^{\hat 0}(X_{t}\in B_{\sqrt{\theta t}}(x))\le P_\omega^{\hat 0}(\sup_{0\le s\le t}|X_s|\ge |x|/2)
\le C\exp[-c\mathfrak{h}(|x|,t)]$.
\end{proof}

\begin{lemma}\label{lem1}
Let $0<\theta_1<\theta_2$, $R>0$ and $\omega\in\Omega_\kappa$.  Recall the definition of the stopping time $\Delta$ in \eqref{def:st-a-s}. 
There exists a constant $\alpha=\alpha(\kappa,d,\theta_1,\theta_2)\ge 1$ such that for any $s\in(\theta_1 R^2,\theta_2R^2)$ and $\sigma>0$
\[
\min_{x\in B_R}P_\omega^{x,0}(X_{\Delta(B_{2R},s)}\in B_{\sigma R})\ge (\frac{\sigma\wedge 1}{2})^\alpha.
\]
\end{lemma}

\begin{proof}
It suffices  to consider the case $\sigma\in(0, 1)$ and $R\ge K_1$, where $K_1=K_1(\theta_1,\theta_2,\kappa,d)$ is a large constant to be determined. Indeed, if $R<K_1$, then by uniform ellipticity, for any $x\in B_R$, 
\[
P_\omega^{x,0}(X_{\Delta(B_{2R},s)}\in B_{\sigma R})
\ge 
P_\omega^{x,0}(X_s=0, \Delta(B_{2R},s)=s)
> 
C(\kappa,d,\theta_1,\theta_2). 
\]
Further, for $R\ge K_1$, if suffices to consider the case $\sigma R\ge \sqrt{K_1}$. Indeed, assume the lemma is proved for $R\ge K_1$ and $\sigma R\ge \sqrt{K_1}$. Then, when $\sigma R<\sqrt{K_1}$ and $x\in B_R$, by uniform ellipticity, 
\begin{align*}
&P_\omega^{x,0}(X_{\Delta(B_{2R},s)}\in B_{\sigma R})\\
&\ge 
P_\omega^{x,0}(X_{\Delta(B_{2R},s-K_1)}\in B_{\sqrt K_1})\min_{y\in B_{\sqrt K_1}}P_\omega^{y,s-K_1}(X_{K_1}=0, \Delta(B_{2R},s)=K_1)\\
&\ge 
(\frac{\sqrt{K_1}}{2R})^\alpha C(K_1,\kappa,d)\ge C(\frac{\sigma}{2})^\alpha.
\end{align*}	
Hence in what follows we only consider the case
$R\ge K_1$ and $\sigma R\ge \sqrt{K_1}$. 

For $(x,t)\in\R^d\times\R$, set
\[
\psi_0(t)=1-\dfrac{1-(\sigma/2)^2}{s}t, \quad
\tilde\psi_1(x,t)=\psi_0-\frac{|x|^2}{4R^2},\quad
\psi_1=\tilde\psi_1\vee 0,
\]
and, for some large constant $q\ge 2$ to be chosen,
\[
\psi(x,t):=\psi_1^2\psi_0^{-q}, \quad w(x,t)=(\sigma/2)^{2q-4}\psi(x,t).
\]


\begin{figure}[H]
\centering
\pgfmathsetmacro{\R}{2}
\pgfmathsetmacro{\r}{1.1}
   \begin{tikzpicture}
    \draw[gray!70,->, >=stealth] (-.8-\R,0)--(\R+.8,0) node[black, right] {$x$};
    \draw[gray!70,->,>=stealth] (0,0) node[black, above right] {\small $0$}--(0,4) node[black,above] {$t$};
     \draw[line width=0.6mm] (-\R,0) coordinate (a) to (-\r,3) coordinate (b) to (\r,3) coordinate (c) to (\R,0) coordinate (d);
        \draw[gray,decorate,decoration={brace,amplitude=10pt}, yshift=1 pt]
(b) -- (c) node[black,above, midway, yshift=8pt] {$B_{\sigma R}$};
    \draw[gray,decorate,decoration={brace,amplitude=10pt,mirror},yshift=-2pt]
(-\R,0) -- (\R,0) node[black,below,midway, yshift=-6pt] {$B_{2R}$};
    \node[below right] at (0,3) {\small $s$};
   \end{tikzpicture}
\caption{The set $U$. The solid line is $\partial^\p U$.}   
\end{figure}

Let $U:=\{\hat x\in B_{2R}\times[0,s): \psi_1(\hat x)>0\}$.
We will show that for $\hat x\in U$,
\[
w(\hat x)\le v(\hat x):=P_\omega^{\hat x}(X_\tau\in B_{\sigma R}).
\]

Recall the parabolic boundary in page~\pageref{page:bdry}. We first show that $w$ satisfies
\begin{equation}\label{e7}
\begin{cases}
w|_{\partial^\p U}\le \mathbbm{1}_{x\in B_{\sigma R}, s=t}\\
\min_{x\in B_R} w(x,0)\ge \frac{1}{2}(\sigma/2)^{2q-4},\\
\mc L_\omega w\ge 0
\quad\mbox{ in $U$, for }q\mbox{ large}.
\end{cases}
\end{equation}
The first two properties in \eqref{e7} are obvious. For the third property, note that 
\begin{align*}
\partial_t\psi=R^{-2}\psi_0^{-q}[\dfrac{1-(\sigma/2)^2}{s/R^2}(q\tfrac{\psi_1}{\psi_0}-2)\psi_1] 
\quad\text{ in }U.
\end{align*}
For any unit vector $e\in\Z^d$, let 
\begin{equation}\label{eq:2nd-difference}
\nabla_e^2u(x,t):=u(x+e,t)+u(x-e,t)-2u(x,t).
\end{equation} 
When $\hat x\in U_1:=\{(z,s)\in U:(y,s)\in U \text{ for all }y\sim z\}$, then $\nabla_e^2 [\psi_1^2(\hat x)]=\nabla_e^2[\tilde\psi_1^2(\hat x)]$. When $\hat x=(x,t)\in U\setminus U_1$, then for some $|e|=1$, either $(x+e,t)$ or $(x-e,t)$ is not in $U$. Say, $(x+e,t)\notin U$, then $x\cdot e\ge 1$ and $\exists \delta\in(0,1)$ such that $\tilde\psi_1(x+\delta e,t)=0$. In both cases, there exists $\delta\in(0,1]$ such that
\begin{align*}
&\nabla^2_e[\psi_1(x,t)^2]\\
&=\tilde\psi_1^2(x+\delta e,t)+\tilde\psi_1^2(x-e,t)-2\psi_1^2(\hat x)\\
&=-\frac{[1+\delta^2+2x\cdot e(\delta -1)]\psi_1}{2R^2}+\frac{1+\delta^2+4(\delta^2+1)(x\cdot e)^2+4(\delta^3-1)x\cdot e}{16 R^4}\\
&\ge -\frac{\psi_1}{R^2}+\frac{(x\cdot e)^2}{4R^4}-\frac{\psi_0^{1/2}}{2R^3},
\end{align*}	
where in the last inequality we used the fact $1\le x\cdot e\le |x|\le 2R\psi_0^{1/2}$. 
Thus, letting $\xi:=\psi_1/\psi_0\in[0,1]$, we have for $\hat x=(x,t)\in\tilde U$,
\begin{align*}
R^2\psi_0^{q-1}\mc L_\omega\psi(\hat x)
&=R^2\left(\sum_{i=1}^d\omega_t(x,x+e_i)\nabla^2_{e_i}[\psi_1^2]/\psi_0+\psi_0^{q-1}\partial_t\psi\right)\\
&\ge \frac{c|x|^2}{R^2\psi_0}-C\xi
-\frac{C}{R\psi_0^{1/2}}
+\dfrac{1-(\sigma/2)^2}{s/R^2}(q\xi-2)\xi\\
&\ge Cq\xi^2-c_1\xi+c_2-c_3/K_1^{1/2},
\end{align*}	
where in the last inequality we used $|x|^2/(4R^2\psi_0)=1-\xi$ and $\psi_0^{1/2}\ge \sigma/2\ge K_1^{1/2}/(2R)$.
Taking $q$ and $K_1$ large enough, we have $\mc L_\omega\psi\ge 0$ in $U$. 
The third property in \eqref{e7} is proved.

Finally, we set $v(\hat x)=P_\omega^{\hat x}(X_{\Delta(B_{2R},s)}\in B_{\sigma R})$. 
By \eqref{e7},  $v(\hat X_t)-w(\hat X_t)$ is a super-martingale for 
$t\le T_U:=\inf\{s\ge 0: \hat X_s\notin U\}$ and $(v-w)|_{\partial^\p U}\ge 0$. Hence, the optional-stopping theorem yields
\[
v(\hat x)-w(\hat x)\ge E^{\hat x}_a[v(\hat X_{T_U})-w(\hat X_{T_U})]\ge 0 \quad \text{ for }\hat x\in U.
\]
In particular, 
$
\min_{x\in B_{R}}v(x,0)\ge 
\min_{x\in B_{R}}w(x,0)
\ge 
(\sigma/2)^{2q-4}/2.
$
\end{proof}

\begin{corollary}\label{cor:prob1}
Assume that $\omega\in\Omega_\kappa$, $R/2>r>1/2$, $\theta>0$.
There exists $c=c(d,\kappa,\theta)\in(0,1)$ such that for any $y\in\partial B_R$ with $B_r(y)\cap B_R\neq\emptyset$,
\[
\min_{x\in B_r(y)\cap B_R}P_\omega^{x,0}\left(X_\cdot \mbox{ exits $B_{2r}(y)\cap B_R$ from $\partial B_R$ before time } \theta r^2\right)>c.
\]
\end{corollary}

\begin{proof}

By uniform ellipticity, it suffices to prove the lemma for all $r\ge 10$.

Let $z=\tfrac{ry}{5|y|}+y\in\R^d$. Note that $B_{r}(y)\subset B_{5r/4}(z)\subset B_{3r/{2}}(z)\subset B_{2r}(y)$ and $B_{r/5}(z)\subset\Z^d\setminus B_R$. Recall $\Delta$ in \eqref{def:st-a-s}. 
Then, by Lemma~\ref{lem1},
\begin{align*}
 &\min_{x\in B_{r}(y)\cap B_R}P_\omega^{x,0}(X_{\Delta(B_{2r}(y)\cap B_R,\theta r^2)}\in \partial B_R)\\
 &\ge 
 \min_{x\in B_{5r/4(z)}}P_\omega^{x,0}(X_{\Delta(B_{3r/2}(z),\theta r^2)}\in B_{r/5}(z))
\ge c(\theta,d,\kappa).
 \end{align*}	
The corollary is proved.  
\end{proof}

\begin{lemma}\label{lem6}
Assume $\omega\in\Omega_\kappa$, $\beta\in(0,1)$. Let $\tau_{\beta,1}=\tau_{\beta,1}(R)=\inf\{t\ge 0: X_t\notin B_R\setminus\bar B_{\beta R}\}$. Then if $y\in B_R\setminus\bar B_{\beta R}\neq\emptyset$ and $\theta>0$,  we have
\[
P^{y,0}_\omega(X_{\tau_{\beta,1}}\in\partial B_{\beta R},\tau_{\beta,1}\le \theta R^2)
\ge 
C\dfrac{\dist(y,\partial B_R)}{R},
\]
where $C=C(\kappa,d,\beta,\theta)$.
\end{lemma}

\begin{proof}
It suffices to prove the lemma for $R>\alpha^2$, where $\alpha=\alpha(\kappa,d,\beta,\theta)$ is a large constant to be determined.
We only need to consider $y$ with 
$\dist(y,\partial (B_R\setminus\bar B_{\beta R}))\ge2$ in which case $R-|y|\asymp \dist(y,\partial B_R)$.

For $\hat x=(x,t)$, let $g(\hat x)=\exp(-\tfrac{\alpha}{R^2}|x|^2-\frac{\alpha t}{\theta R^2})$. 
Using the inequalities $e^a+e^{-a}\ge 2+a^2$ and $e^a\ge 1+a$,  we get for $x\in B_R\setminus \bar B_{\beta R}, t\in\R$,
\begin{align*}
\mc L_\omega g(\hat x)
&=g(\hat x)\left(\sum_{i=1}^d\omega_t(x,x+e_i)[e^{-\tfrac{\alpha}{R^2}(1+2x_i)}+e^{-\tfrac{\alpha}{R^2}(1-2x_i)}-2]-\frac{\alpha}{\theta R^2}\right)\\
&\ge 
g(\hat x)\left(\sum_{i=1}^d\omega_t(x,x+e_i)[e^{-\alpha/R^2}(2+4\alpha^2x_i^2/R^4)-2]-\frac{\alpha}{\theta R^2}\right)\\
&\ge 
g(\hat x)(-C\frac{\alpha}{R^2}+c\frac{\alpha^2|x|^2}{R^4}-\frac{\alpha}{\theta R^2})
\\
&\ge 
\frac{\alpha}{R^2}g(\hat x)
(c\alpha\beta^2-C)>0
\end{align*}	
if  $\alpha$ is chosen to be large enough.
Hence $g(\hat X_t)$ is a submartingale for $t\le \tau_{\beta,1}$. 


Recall the definition of the stopping time $\Delta$ in \eqref{def:st-a-s}. Let
\[
v(\hat x):=\frac{g(\hat x)-e^{-\alpha}}{e^{-\alpha\beta^2}-e^{-\alpha}}
\quad \mbox{ and }
u(\hat x):=P_\omega^{\hat x}(X_{\Delta(B_R\setminus\bar B_{\beta R}, \theta R^2)}\in \partial B_{\beta R}).
\]
Set $\ms D=(B_R\setminus\bar B_{\beta R})\times[0,\theta R^2)$. 
Since $(u-v)|_{\partial^\p\ms D}\ge 0$ and $u(\hat X_t)$ is a martingale in $\ms D$, 
by the optional-stopping theorem we conclude that $u\ge v$ in $\ms D$.
In particular, 
$u(x,0)\ge v(x,0)\ge 
C(R^2-|x|^2)/R^2$ for $x\in B_R\setminus\bar B_{\beta R}$.
\end{proof}
%

\begin{lemma}\label{lem7}
Let $\beta\in (0,1)$,  and let $\tau_{\beta,1}$ be as in Lemma~\ref{lem6}. For $\theta>0$, there exists a constant $C=C(\beta,\kappa,d,\theta)$ such that, if $x\in B_R\setminus \bar B_{\beta R}\neq\emptyset$,
\[
P^{x,0}_\omega(X_{(\theta R^2)\wedge\tau_{\beta,1}}\notin\partial B_R)\le  C\dist(x,\partial B_R)/R.
\]
\end{lemma}
\begin{proof}
Set $\ms D:=(B_R\setminus \bar B_{\beta R})\times[0,\theta R^2)$. It suffices to consider the case $R>k^2$, where $k=k(\beta,\kappa,d,\theta)>\log 2/\log(2-\beta^2)$ is a large constant to be determined.
Let  $h(x,t)=2-|x|^2/(R+1)^2+t/(\theta R^2)$.

Recall the notation $\nabla^2$ in \eqref{eq:2nd-difference}. For $\hat x=(x,t)\in \ms D$, note that  $1\le h(\hat x)\le 3$ and $|\nabla_{e_i}^2(h^{-k})(\hat x)-\partial_{ii}(h^{-k})(\hat x)|\le Ck^3R^{-3}h^{-k}(x,t)$.
Hence for any $\hat x=(x,t)\in \ms D$, when $k$ is sufficiently large,
\begin{align*}
&\mc L_\omega (h^{-k})(\hat x)\\
&\ge 
\sum_{i=1}^d \omega_t(x,x+e_i)\partial_{ii}(h^{-k})-Ck^3R^{-3}h^{-k}+\partial_t (h^{-k})\\
&\ge
c\sum_{i=1}^d[k^2\tfrac{x_i^2}{(R+1)^4}h^{-k-2}+\tfrac{k}{(R+1)^2}h^{-k-1}]-Ck^3R^{-3}h^{-k}-\tfrac{k}{\theta R^2}h^{-k-1}\\
&\ge 
ckh^{-k}R^{-2}[k-C-Ck^2R^{-1}]>0,
\end{align*}
which implies that $h(\hat X_t)^{-k}$ is a submartingale inside the region $\ms D$.

Next, set (Recall the stopping time $\Delta$ in \eqref{def:st-a-s}.)
\[
u(\hat x)=P_\omega^{\hat x}(X_{\Delta(B_R\setminus\bar B_{\beta R}, \theta R^2)}\notin\partial B_R).
\]
Then $u(\hat X_t)+(2-\beta^2)^kh(\hat X_t)^{-k}$ is a submartingale in $\ms D$. Since
\[
\left\{
\begin{array}{rl}
&h^{-k}|_{x\in \partial B_R}\le (2-1+0)^{-k}=1\\
&h^{-k}|_{x\in\partial B_{\beta R}}\le (2-\beta^2)^{-k}\\
& h^{-k}|_{t=\theta R^2}\le (2-1+1)^{-k}\le (2-\beta^2)^{-k}
\end{array},
\right.
\]
by the optional stopping theorem, we have for $x\in B_R\setminus \bar B_{\beta R}$, 
\begin{align*}
u(x,0)+(2-\beta^2)^kh(x,0)^{-k}\le \sup_{\partial^\p\ms D} [u+(2-\beta^2)^kh^{-k}]\le (2-\beta^2)^k.
\end{align*}
Therefore, for any $x\in B_R\setminus \bar B_{\beta R}$,
\begin{align*}
u(x,0)&\le (2-\beta^2)^k(1-h(x,0)^{-k})\\
&\le C(h(x,0)-1)= C[1-|x|^2/(R+1)^2]\\
&\le C\dist(x,\partial B_R)/R.
\end{align*}
Our proof of Lemma~\ref{lem7} is complete.
\end{proof}

\newtheorem{atheorem}{Theorem}
\numberwithin{atheorem}{subsection}
\newtheorem{alemma}[atheorem]{Lemma}

\appendix
\section{Appendix}
\subsection{Properties (i)-(iii) in Remark~\ref{rm3}}
\begin{proof}
(i)Since $\mb Q$ is an invariant measure for $(\bar\omega_t)$, we have for any
bounded measurable function $f$ on $\Omega$, $y\in\Z^d$, $\hat x=(x,t)$, and $s<t$,
\begin{align}\label{inv-meas}
0&=E_\mb Q E_\omega^{0,0}[f(\theta_{-\hat x}(\bar\omega_{t-s}))-f(\theta_{-\hat x}\omega)]\nonumber\\
&=
E_\mb P\left[
\rho(\omega)\sum_{y\in\Z^d}p^\omega(0,0;x-y,t-s)[f(\theta_{-y,-s}\omega)-f(\theta_{-\hat x}\omega)]
\right]\nn\\
&=
E_\mb P\left[
f(\omega)[\sum_{y\in\Z^d}\rho(\theta_{\hat y}\omega)p^\omega(\hat y,\hat x)-\rho(\theta_{\hat x}\omega)]
\right],
\end{align}
where $\hat y=(y,s)$ and we used the translation-invariance of $\mb P$ in the last equality.  Moreover, by Fubini's theorem, for any bounded compactly-supported continuous function $\phi:\R\to\R$, 
\[
E_{\mb P}\left[f(\omega)
\int_{-\infty}^t\phi(s)[\sum_{y\in\Z^d}\rho(\theta_{\hat y}\omega)p^\omega(\hat y,\hat x)-\rho(\theta_{\hat x}\omega)]\dd s 
\right]
=0
\]
Thus we have that $\mb P$-almost surely, for any such test function $\phi$ on $\R$, 
\begin{equation*}
\int_{-\infty}^t\phi(s)[\sum_{y\in\Z^d}\rho_\omega(\hat y)p^\omega(\hat y,\hat x)-\rho_\omega(\hat x)]\dd s =0,
\end{equation*}
which (together with the translation-invariance of $\mb P$) implies that $\mb P$-almost surely, $\rho_\omega(x,t)\delta_x\dd t$ is an invariant measure for the process $(\hat X_t)_{t\ge 0}$.

(ii)
We have $\rho_\omega>0$ since the measures $\mb Q$ and $\mb P$ are equivalent. 
The uniqueness follows from the uniqueness of $\mb Q$ in \cite[Theorem 1.2]{DGR15}.

(iii) By \eqref{inv-meas} and Fubini's theorem, we also have that $\mb P$-almost surely, for any test function $\phi(t)$ as in (i) and any $h>0$, $x\in\Z^d$,
\[
\int_{-\infty}^\infty \phi(t)[\sum_{y\in\Z^d}\rho_\omega(y,t)(p^\omega(y,t;x,t+h)-\delta_x(y))-(\rho_\omega(x,t+h)-\rho_\omega(\hat x)]\dd t
=0.
\]
Dividing both sides by $h$ and letting $h\to 0$, we obtain \eqref{rho-invariance} with $\partial_t\rho_\omega$ replaced by the weak derivative. 
Note that the weak differentiability of $\rho_\omega$ in $t$ implies that it has an absolutely continuous (in $t$) version.
 Since $\rho_\omega$ is only used as a density, we may always assume that $\mb P$-almost surely, $\rho_\omega(x,\cdot)$ is continuous and almost-everywhere differentiable in $t$. 
\end{proof}

\subsection{Proof of Corollary~\ref{cor:hoelder}}
\begin{proof}
Assume $(x_0,t_0)=(0,0)$ and fix $R>0$. Let $R_k=2^{-k}R$ and $Q^k=B_{R_k}\times(-R_k^2,0]$. Note that $Q^{k+1}\subset Q^k$. For any bounded subset $E\subset\Z^d\times\R$, denote $\osc_E u:=\sup_E u-\inf_E u$. Set 
\[
v_k:=(u-\inf_{Q^k}u)/\osc_{Q^k}u.
\]
Notice that $\inf_{Q^k}v_k=0, \sup_{Q^k}v_k=1$ and 
\[
\osc_{Q^{k+1}}u=\osc_{Q^{k+1}}v_k \cdot\osc_{Q^k}u.
\]
We claim that $\osc_{Q^{k+1}}v_k\le 1-\delta$ for some $\delta=\delta(d,\kappa)\in(0,1)$. Indeed, replacing $v_k$ by $1-v_k$ if necessary, we can assume $\sup_{B_{R_{k+1}}\times(-\tfrac34 R_k^2,-\tfrac12 R_k^2)}v_k\ge 1/2$. By the PHI for $\mc L^*_\omega$ (Theorem~\ref{thm-ah}), 
\[
\inf_{Q^{k+1}}v_k\ge c\sup_{B_{R_{k+1}}\times(-\tfrac34 R_k^2,\tfrac12 R_k^2)}v_k\ge \tfrac c2:=\delta\in(0,1)
\]
and so $\osc_{Q^{k+1}}v_k\le \sup_{Q^k}v_k-\inf_{Q^{k+1}}v_k\le 1-\delta$. The claim is proved. So 
\[
\osc_{Q^{k+1}}u\le (1-\delta)\osc_{Q^k}u.
\]
If $r>R/2$,  the Corollary is trivial. If $r\le R/2$, we iterate the above inequality $k=\floor{\log_2(R/r)}$ times (so that $Q^{k+1}\subset B_r\times(-r^2,0]\subset Q^k$) to obtain 
\[
\osc_{B_r\times(-r^2,0]}u\le \osc_{Q^{k}}u\le (1-\delta)^k\osc_{Q^0}u\le (1-\delta)^{-1}(r/R)^\gamma \osc_{Q^0}u.
\]
where $\gamma=-\log_2(1-\delta)$. Our proof is complete.
\end{proof}

\subsection{Parabolic maximum principle}
In what follows we will prove a maximum principle for parabolic difference operators under the discrete space and continuous time setting. 
For any $\ms D\subset B_R\times(0,T)$, $\hat x:=(x,t)\in\ms D$ and  $u: \ms D\cup\partial^\p\ms D\to\R$, 
define
\[
I_u(\hat x)
:=\{
p\in\R^d: u(x,t)-u(y,s)\ge p\cdot(x- y)\mbox{ for all $(y,s)\in\ms D\cup\partial^\p\ms D$ with $s> t$}\},
\]
\[
\Gamma=\Gamma(u,\ms D):=\{(x,t)\in\ms D: I_u(x,t)\neq\emptyset\},
\]
\begin{align}\label{eq:181222}
\Gamma^+
=\Gamma^+(u,\ms D)=\{\hat x\in\Gamma:R|p|<u(\hat x)-p\cdot x \mbox{ for some }p\in I_u(\hat x)\}.
\end{align}
\begin{atheorem}[Maximum principle]\label{thm:max-p}
Let $\omega\in\Omega_\kappa$. Recall $\int_{\ms D}$ in \eqref{eq:def-integral}. Assume that $\ms D\subset B_R\times(0,T)$ is an open subset of $\Z^d\times\R$ for some $R,T>0$. Let $f$ be a measurable function on $\ms D$. For any function $u:\ms D\cup\partial^\p\ms D\to\R$  that solves $\mc L_a u\ge -f \quad{ in }\,{\ms D}$,
we have
\[
\sup_{\ms D} u\le \sup_{\partial^\p\ms D}u+ CR^{d/(d+1)}
(\int_{\Gamma^+}|f|^{d+1})^{1/(d+1)}
\]
\end{atheorem}
\begin{proof}
Without loss of generality, assume $f\ge 0$, $\sup_{\partial^\p\ms D}u=0$, and
\[\sup_{\ms D}u:=M>0.\]

Let 
\[
\Lambda=\{
(\xi,h)\in\R^{d+1}: R|\xi|<h<M/2
\}.
\]
For $(x,t)\in\ms D$, define a set
\[
\chi(x,t)=\{(p,u(x,t)-x\cdot p): p\in I_u(x,t)\}\subset \R^{d+1}.
\]

First, we claim that
\begin{equation}\label{e3}
\Lambda\subset \chi(\Gamma^+):=\bigcup_{(x,t)\in\Gamma^+}\chi(x,t).
\end{equation}
This will be proved by showing that for any $(\xi,h)\in\Lambda$, we have $(\xi,h)\in\chi(x_1,t_1)$ for some $(x_1,t_1)\in\Gamma^+$.
Indeed, fix $(\xi,h)\in\Lambda$ and define
\[
\phi(x,t):=u(x,t)-\xi\cdot x-h.
\]
Since $\sup_{\ms D}\phi\ge M-|\xi|R-h>0$, there exists $(x_0,t_0)\in\ms D$ with $\phi(x_0,t_0)>0$. Now for any $x\in\Z^d$, set (with the convention $\sup\emptyset=-\infty$)
\[
N_x=\sup\{t:(x,t)\in\ms D \text{ and } \phi(x,t)\ge 0\},
\]
and let $(x_1,t_1)$ be such that
\[
t_1=N_{x_1}=\max_{x\in B_R}N_x\ge N_{x_0}\ge t_0.
\]
By the continuity of $\phi$, we get $\phi(x_1,t_1)\ge 0$ and $(x_1,t_1)\in \ms D\cup\partial^\p\ms D$. Since  $\phi|_{\partial^\p\ms D}< 0$, we have $(x_1,t_1)\in\ms D$.
Moreover, since $\ms D$ is an open set, we can conclude that $\phi(x_1,t_1)=0$ and $\phi(x_1,s)<0$ for all $s>t_1$ with $(x_1,s)\in\ms D\cup\partial^\p\ms D$. Hence $\xi\in I_u(x_1,t_1)$ and $u(x_1,t_1)-\xi\cdot x_1=h>R|\xi|$, which implies that $(\xi, h)\in\chi(x_1,t_1)$ and $(x_1,t_1)\in\Gamma^+$. Display \eqref{e3} is proved.

Next, setting 
\[
\chi(\Gamma^+, x):=\bigcup_{s:(x,s)\in\Gamma^+}\chi(x,s),
\]
we will show that
\begin{equation}\label{e4}
\vol_{d+1}(\chi(\Gamma^+, x))
\le 
C\int_0^T (f(x,t)/\varepsilon)^{d+1}1_{(x,t)\in\Gamma^+}\dd t,
\end{equation}
where $\vol_{d+1}$ is the volume in $\R^{d+1}$.
To this end, let $
\tilde\chi(x,t)
=I_u(x,t)\times\{ u(x,t)\}\subset\R^{d+1}$.  Noting that, for fixed $x$, the map $(y,s)\mapsto(y,s+y\cdot x)$ is volume preserving, 
we then have
\begin{align}\label{e5}
\vol_{d+1}(\chi(\Gamma^+, x))
&=
\vol_{d+1}(\tilde\chi(\Gamma^+, x))\nonumber\\
&=
\int_0^T (-\partial_t u) \vol_d(I_u(x,t))1_{(x,t)\in\Gamma^+}\dd t.
\end{align}
For any  fixed $p\in I(x,t)$, $(x,t)\in\Gamma^+$, set
\[
w(y,s)=u(y,s)-p\cdot y.
\]
Then $I_w(x,t)=I_u(x,t)+p$. Since $w(x,t)-w(x\pm e_i,t)\ge \mp q_i$ for any $q\in I_w(x,t)$, $i=1,\ldots d$,
we have
\[
\vol_d(I_u(x,t))=\vol_d(I_w(x,t))
\le \prod_{i=1}^d[2u(x,t)-u(x+e_i,t)-u(x-e_i,t)].
\]
This inequality, together with \eqref{e5}, yields 
\begin{align*}
&\vol_{d+1}(\chi(\Gamma^+, x))\\
&\le -C\int_0^T \partial_t u\prod_{i=1}^d a_t(x,x+e_i)[2u(x,t)-u(x+e_i,t)-u(x-e_i,t)]1_{(x,t)\in\Gamma^+}\dd t\\
&\le 
C\int_0^T [-\mc L_a u(x,t)]^{d+1}1_{(x,t)\in\Gamma^+}\dd t.
\end{align*}
Display \eqref{e4} is proved. Finally, by \eqref{e3}, \eqref{e4} and $\vol_{d+1}(\Lambda)=CM^{d+1}/R^d$, 
we conclude that $
M^{d+1}/R^d\le C\int_{\Gamma^+}f^{d+1}$.
The theorem follows immediately. 
\end{proof}
\subsection{Mean value inequality}\label{subset:mvi}
\begin{atheorem}\label{thm:mvi}
Let $\theta>0$ and $a\in\Omega_\kappa$. Recall $\norm{\cdot}_{\ms D,p}$ in \eqref{eq:def-norm}. 
For any $\theta_1\in(0,\theta)$, $\rho\in (0,1)$ and $p>0$, there exists 
$C=C(\kappa,d,p,\theta,\theta_1,\rho)$ such that for any function $u$  that solves
$\mc L_a u\ge 0$ in $\ms D=B_R\times[0,\theta R^2)$,
we have
\[
\sup_{B_{\rho R}\times[0,\theta_1R^2)}u\le C\norm{u^+}_{\ms D,p}.
\]
\end{atheorem}
\begin{proof}
Since $\norm{u^+}_{\ms D,p}$ is increasing in $p>0$, it suffices to consider  $p\in(0,1)$.
Let $\beta\ge 2$ be a constant to be determined, and set
\[
\eta(x):=\big(1-\frac{|x|^2}{R^2}\big)^\beta 1_{x\in B_R},\quad
{
 \zeta(t):=\big(1-\dfrac{t}{\theta R^2}\big)^\beta 1_{0\le t<\theta R^2},}
\]
and set $v=\eta u^+$, $\bar v=v\zeta$. Define an elliptic operator $L_a^E$ to be
\[
L_a^E f(x,t)=\sum_{y:y\sim x}a_t(x,y)(f(y,t)-f(x,t)),
\]
so that $\mc L_a=L_a^E+\partial_t$. Note that $\bar v|_{\partial^\p \ms D}=0$ and $\bar v(\hat x)>0$ for $\hat x\in\Gamma^+(\bar v, \ms D)$. (Recall the definition of $\Gamma^+$ above Theorem~\ref{thm:max-p}.)
By the same argument as in \cite[displays (27),(28) and (29)]{GZ}, we have that on $\Gamma^+(\bar v, \ms D)$, $u^+=u$ and
\[
L_a^E v
\ge 
\eta L_a^E u-C_\beta\eta^{1-2/\beta}R^{-2}u^+.
\]
Hence, for $X=(x,t)\in \Gamma^+(\bar v, \ms D)$,
\begin{align*}
\mc L_a \bar v
&=\zeta L_a^E v+\partial_t(\zeta\eta u)\\
&\ge 
\zeta\eta L_a^E u-C_\beta\zeta\eta^{1-2/\beta}R^{-2}u^++\eta u\partial_t \zeta+\zeta\eta \partial_t u\\
&=
\zeta\eta\mc L_a u
-C_\beta\zeta\eta^{1-2/\beta}R^{-2}u^++\eta u\partial_t \zeta\\
&\ge 
-C_\beta\zeta\eta^{1-2/\beta}R^{-2}u^++\eta u\partial_t \zeta.
\end{align*}
Noting that in $\ms D$, $\partial_t\zeta\ge -C\beta R^{-2}\zeta^{1-1/\beta}/\theta$ and $\zeta,\eta\in[0,1]$, we have
\[
\mc L_a\bar v
\ge 
-C(\eta\zeta)^{1-2/\beta}R^{-2}u^+
\qquad\mbox{in }\Gamma^+(\bar v, \ms D).
\]
Applying Theorem~\ref{thm:max-p} to $\bar v$ and taking $\beta=2(d+1)/p$,
\begin{align*}
\sup_{\ms D}\bar v
&\le C\norm{(\eta\zeta)^{1-2/\beta}u^+/\varepsilon}_{\ms D,d+1}\\
&\le C(\sup_{\ms D}\bar v)^{1-p/(d+1)}\norm{(u^+)^{p/(d+1)}/\varepsilon}_{\ms D,d+1}.
\end{align*}
Since $\sup_{B_{\rho R}\times[0,\theta_1R^2)}u\le C\sup_{\ms D}\bar v$, the theorem follows.
\end{proof}

\subsection{Reverse H\"older implies $A_p$}
Recall $\abs{\ms D}, \int_{\ms D}, \norm{\cdot}_{\ms D,p}$, and the parabolic cubes in \eqref{eq:def-integral}, \eqref{eq:def-norm} and \eqref{eq:def-parakub}. 
\begin{alemma}
Let $K^0\subset\Z^d\times\R$ be a parabolic cube with side-length $r>0$. If a function $w>0$ on $K^0$ satisfies $RH_q(K^0)$, $q>1$, then 
\begin{enumerate}[(i)]
\item $w\in A_p(K^0)$ for some $1<p<\infty$;
\item $\frac{w(E)}{w(K)}\ge C(\frac{|E|}{|K|})^c$ for all $E\subset K$ where $K\neq\emptyset$ is a subcube of $K^0$.
\end{enumerate}
\end{alemma}
\begin{proof}
First, we claim that there exist constants $\gamma,\delta\in (0,1)$ such that $w(E)>\gamma w(K)$ implies $|E|>\delta|K|$ for all $E\subset K$ where $K\neq\emptyset$ is a subcube of $K^0$. Indeed, this is a simple consequence of H\"older's inequality:
\begin{align*}
\frac{1}{|K|}w(E)=\frac{1}{|K|}\int_{K} w\mathbbm{1}_{E}
\le (\frac{|E|}{|K|})^{1/q'}\norm{w}_{K,q}\stackrel{(RH_q)}{\le} C\left(\frac{|E|}{|K|}\right)^{1/q'}\frac{w(K)}{|K|},
\end{align*}	
where $q'=q/(q-1)$ denotes the conjugate of $q$. 

Assume $K^0=K_r$. Let $\mc M_k(K_r), k>1$ be the family of nonempty subcubes of $K_r$ of the form 
\[
(\prod_{i=1}^d[\frac{m_i}{2^k}r,\frac{1+m_i}{2^k}r)\cap\Z^d)\times[\frac{n_i}{4^k}r^2,\frac{1+n_i}{4^k}r^2)
\]
where $m_i,n_i$'s are integers. Elements in  $M_k(K_r)$ are called  $k$-level dyadic subcubes of $K_r$. Note that every $k$-level cube $K$ is contained in a unique $(k-1)$-level ``parent" denoted by $K^{-1}$.
 Since the class $A_p$ is invariant under constant multiplication, we may assume that $w(K^0)/|K^0|=1$.

Let $f:=w^{-1}\mathbbm{1}_{K^0}$ and define a maximal function
\[
M_f(x):=\sup_{K\ni x}\frac{1}{w(K)}\sum_K |f|w,
\] where the supremum is taken over all dyadic subcubes $K$ of $K_0$.
 Consider the level sets
 \[
 E_k=\{x\in K^0: M_f(x)>2^{Nk}\}, \quad k=0,1,2,\ldots
 \]
 where $N$ is a big constant to be determined. Notice that by assumption, $E_0$ is comprised of dyadic subcubes strictly smaller than $K_0$. Since $w$ is volume-doubling, there exists a constant $c_0>0$ such that for any maximal dyadic subcube $K$ of $E_{k-1}$,
 \[
 \int_K fw\le \int_{K^{-1}}fw\le 2^{N(k-1)}w(K^{-1})\le 2^{N(k-1)+c_0}w(K).
 \]
Moreover, for the same $K$, we have $2^{Nk}w(E_k\cap K)\le \int_{K}fw$ and so, by the inequality above, $w(E_k\cap K)\le 2^{c_0-N}w(K)$.
We now take $N$ to be large enough that $w(E_k\cap K)\le (1-\gamma) w(K)$ which implies $|E_k\cap K|\le (1-\delta)|K|$.   Summing over all such $K$'s, we have  $|E_k|\le (1-\delta) |E_{k-1}|$, $k\ge 1$.
 Thus
\[
|E_k|\le\delta^k|E_0|\le \delta^k|K^0|, \quad k=0,\ldots
\]
and so, for $p>1$ chosen so that $p'=p/(p-1)$ is sufficiently close to $1$,
\begin{align*}
\int_{K^0} f^{p'-1}
&=\int_{K^0\cap\{x:M_f\le 1\}}f^{p'-1}+\sum_{k=0}^\infty \int_{E_k\setminus E_{k+1}}f^{p'-1}\\
&\le |K^0|+\sum_{k=0}^\infty 2^{(p'-1)N(k+1)}\delta^k|K^0|\\
&\le C|K^0|.
\end{align*}
(i) is proved. (ii) then follows from  H\"older's inequality
\begin{align*}
\frac{1}{w(K)}\int_E w^{-1}w\le \left(\frac{1}{w(K)}\int_K w^{-p'}w\right)^{1/p'}\left(\frac{1}{w(K)}\int_K\mathbbm{1}_E w\right)^{1/p}
\end{align*}	
and the $A_p$ inequality.
\end{proof}

\subsection{Proof of Corollary~\ref{cor:q-estimates}\eqref{cor:green1}\eqref{cor:green2}}
\begin{proof}

\eqref{cor:green1} For any $\hat x=(x,t)\in\R^d\times[0,\infty)$ and $\omega\in\Omega_\kappa$,
set
\[
v(\hat x)=q^\omega(\hat 0; \floor{x},t) \quad\mbox{ and }\quad
a^\omega(x):=\int_0^\infty(v(0,t)-v(x,t))\dd t.
\] 
When $d=2$, it suffices to consider $x\in\mb B_1\setminus\{0\}$.
We fix a small number $\epsilon\in(0,1)$ and split the integral $a^\omega(nx)$ into four parts:
\begin{align*}
a^\omega(nx)=\int_0^{n^\epsilon}+\int_{n^\epsilon}^{n^2}
+\int_{n^2}^\infty=:\rom{1}+\rom{2}+\rom{3},
\end{align*}
where it is understood that the integrand is $(v(0,t)-v(x,t))\dd t$. 

First, we will show that $\mb P$-almost surely,
\begin{equation}\label{eq:green-1}
\varlimsup_{n\to\infty}|\rom{1}|/\log n\le \epsilon.
\end{equation}
By Theorem~\ref{thm:hke},   for any $t\in(0,n^\epsilon)$, $x\in\Z^2\setminus\left\{0\right\}$ and all $n$ large enough,
$v(nx,t)
\le Ce^{-cn|x|}/\rho_\omega(B_{\sqrt t},0).
$
Thus
\[
\int_0^{n^\epsilon}v(nx,t)\dd t
\le 
\frac{n^\epsilon}{\rho_\omega(\hat 0)} e^{-cn|x|}.
\]
By \eqref{cor:q-hke}, there exists   $t_0(\omega)>0$ such that for $n$ big enough with $n^\epsilon>t_0$, 
\begin{align*}
\int_0^{n^\epsilon}v(0,t)\dd t
\le 
\frac{Ct_0}{\rho_\omega(\hat 0)}+\int_{t_0}^{n^\epsilon}\frac{C}{t}\dd t
\le \frac{Ct_0}{\rho_\omega(\hat 0)}+C\epsilon\log n,
\end{align*}
Display \eqref{eq:green-1} follows immediately.

In the second step, we will show that (note that $2p^\Sigma_1(0,0)=1/\pi\sqrt{\det\Sigma}$)
\begin{equation}\label{eq:green-2}
\limsup_{n\to\infty}|\rom{2}-2p_1^\Sigma(0,0)\log n|/\log n\le C\epsilon,
\quad \mb P\mbox{-a.s.}
\end{equation}
Indeed, by Theorem~\ref{thm:llt}, there exists $C(\omega,\epsilon)>0$ such that
$|tv(0,t)-p_1^\Sigma(0,0)|\le\epsilon$ whenever $t\ge C(\omega,\epsilon)$.
Now, taking $n$ large enough such that $n^{\epsilon}>C(\omega,\epsilon)$,
\begin{align}\label{eq:green-21}
&\Abs{\int_{n^\epsilon}^{n^2}v(0,t)\dd t-(2-\epsilon)p_1^\Sigma(0,0)\log n}\nn\\
&\le 
\int_{n^\epsilon}^{n^2}\Abs{\frac{tv(0,t)-p^\Sigma_1(0,0)}{t}}\dd t\nn\\
&\le 
\epsilon\int_{n^\epsilon}^{n^2}\frac{\dd t}{t}<2\epsilon\log n.
\end{align}
On the other hand, for $t\ge n^\epsilon>t_0(\omega)$, by \eqref{cor:q-hke},
$v(nx,t)\le \frac{C}{t}(e^{-cn|x|}+e^{-cn^2|x|^2/t})$. Thus
\begin{equation}\label{eq:green-22}
\int_{n^\epsilon}^{n^2}v(nx,t)\dd t\le 
\int_{n^\epsilon}^{n^{2-\epsilon}}\frac{C}{t}e^{-cn^\epsilon|x|^2}\dd t
+\int_{n^{2-\epsilon}}^{n^2}\frac{C}{t}\dd t
\le C\epsilon\log n.
\end{equation}
Displays \eqref{eq:green-21} and \eqref{eq:green-22} imply \eqref{eq:green-2}.

Finally, we will prove that for $\mb P$-almost every $\omega$,
\begin{equation}\label{eq:green-3}
\limsup_{n\to\infty}|\rom{3}|/\log n=0.
\end{equation}
Since $|x|<1$, by \eqref{eq:holder-v}, for any $t\ge n^2\ge t_0(\omega)$,
\[
\Abs{v(0,t)-v(nx,t)}\le 
C\left(
\frac{n}{\sqrt t}
\right)^\gamma t^{-1}.
\]
Therefore, $\mb P$-almost surely, when $n^2>t_0(\omega)$,
\begin{align*}
\Abs{\int_{n^2}^\infty v(0,t)-v(nx,t)\dd t}
&\le 
Cn^\gamma\int_{n^2}^\infty \frac{1}{t^{\gamma/2+1}}\dd t\le C.
\end{align*}
Display \eqref{eq:green-3} follows. 
Combining \eqref{eq:green-1}, \eqref{eq:green-2} and \eqref{eq:green-3}, we have for $d=2$, 
\[
\varlimsup_{n\to\infty}\Abs{\frac{a^\omega(nx)}{\log n}-2p_1^\Sigma(0,0)}\le C\epsilon,
\]
Noting that $\epsilon>0$ is arbitrary, we obtain Corollary~\ref{cor:q-estimates}\eqref{cor:green1}.

\eqref{cor:green2} 
We fix a small constant $\epsilon\in(0,1)$.  Note that 
\[
n^{d-2}\int_0^\infty q^\omega(\hat 0;\floor{nx},t)\dd t
=\int_0^\infty n^d v(nx,n^2s)\dd s.
\]
For any fixed $x\in\R^d$, write
\begin{align*}
\int_0^\infty n^d v(nx,n^2s)\dd s
=\int_0^{n^{-\epsilon}}+\int_{n^{-\epsilon}}^\epsilon+\int_\epsilon^{1/\sqrt \epsilon}+\int_{1/\sqrt\epsilon}^\infty
=:\rom{1}+\rom{2}+\rom{3}+\rom{4}.
\end{align*}
First, by Theorem~\ref{thm:hke}, for $s\in(0,n^{-\epsilon})$, 
we have $v(nx,n^2s)\le Ce^{-cn^\epsilon|x|^2}/\rho_\omega(\hat 0)$, hence
\begin{equation}\label{eq:green2-1}
\varlimsup_{n\to\infty}\rom{1}\le C\lim_{n\to\infty}n^{d-\epsilon}e^{-cn^\epsilon|x|^2}/\rho_\omega(\hat 0)=0.
\end{equation}
Second, by \eqref{cor:q-hke}, when $n$ is large enough, then for all $t\ge n^{2-\epsilon}$, we have 
$v(nx,t)\le Ct^{-d/2}e^{-cn^2|x|^2/t}$ . Hence
\begin{equation}\label{eq:green2-2}
\varlimsup_{n\to\infty}\rom{2}\le 
\varlimsup_{n\to\infty} Cn^d\int_{n^{-\epsilon}}^\epsilon (n^2s)^{-d/2}e^{-c|x|^2/s}\dd s\le C\epsilon.
\end{equation}
Moreover, by Theorem~\ref{thm:llt}, there exists $N(\omega,\epsilon)$ such that for $n\ge N(\omega,\epsilon)$, we have $\sup_{|s|\ge \epsilon}|v(nx,n^2s)-p_s^\Sigma(0,x)|\le\epsilon$. Hence
\begin{equation}\label{eq:green2-3}
\varlimsup_{n\to\infty}\Abs{\rom{3}-\int_{\epsilon}^{1/\sqrt \epsilon}p_s^\Sigma(0,x)\dd s}\le \sqrt\epsilon.
\end{equation}
Further, by \eqref{cor:q-hke},  
for $d\ge 3$,
\begin{equation}\label{eq:green2-4}
\varlimsup_{n\to\infty}\rom{4}
\le 
C\int_{1/\sqrt\epsilon}^\infty \frac{n^d}{(n^2s)^{d/2}}\dd s=C\epsilon^{(d-2)/4}.
\end{equation}
Finally,  combining \eqref{eq:green2-1},\eqref{eq:green2-2}, \eqref{eq:green2-3} and \eqref{eq:green2-4}, we get
\[
\varlimsup_{n\to\infty}
\Abs{\int_0^\infty n^d v(nx,n^2s)\dd s-\int_{\epsilon}^{1/\sqrt \epsilon}p_s^\Sigma(0,x)\dd s}
\le C\epsilon^{1/4}.
\]
Letting $\epsilon\to 0$,  \eqref{cor:green2} is proved.
\end{proof}

\end{document}